\definecolor{darkblue}{rgb}{0,0,0.8}
\definecolor{darkgreen}{rgb}{0,0.8,0}
\definecolor{magenta}{rgb}{0.5,0,0.5}
\newcommand{\mathleft}{\@fleqntrue\@mathmargin0pt}
\numberwithin{equation}{section}
\newcommand{\dualV}[2]{\langle #1, #2 \rangle_{V^*\times V}}
\newcommand{\dualVell}[2]{\langle #1, #2 \rangle_{V_{\ell}^*\times V_{\ell}}}
\newcommand{\dualX}[3][]{\langle #2,#3 \rangle_{#1^{*}\times #1}}
\newcommand{\inner}[3][]{( #2 , #3 )_{#1}}
\newcommand{\innerb}[3][]{\big(#2,#3\big)_{#1}}
\newcommand{\innerB}[3][]{\Big(#2,#3\Big)_{#1}}
\newcommand{\diff}[1]{\,\mathrm{d}#1}
\newcommand{\R}{\mathbb{R}}
\newcommand{\N}{\mathbb{N}}
\newcommand{\E}{\mathbb{E}}
\newcommand{\F}{\mathcal{F}}
\newcommand{\U}{\mathbf{U}}
\newcommand{\D}{\mathcal{D}}
\renewcommand{\L}{\mathcal{L}}
\renewcommand{\P}{\mathcal{P}}
\theoremstyle{plain}
\newtheorem{definition}{Definition}[section]
\newtheorem{theorem}[definition]{Theorem}
\newtheorem{lemma}[definition]{Lemma}
\newtheorem{assumption}{Assumption}
\theoremstyle{definition}
\newtheorem{remark}[definition]{Remark}
\begin{document}
	
\title[Domain decomposition for SPDEs]{A domain decomposition method for stochastic evolution equations}

\author[E.~Buckwar]{Evelyn Buckwar}
\address{Evelyn Buckwar\\
Institute of Stochastics\\
Johannes Kepler University\\
Altenberger Straße 69, 4040 Linz, Austria}
\email{evelyn.buckwar@jku.at}

\author[A.~Djurdjevac]{Ana Djurdjevac}
\address{Ana Djurdjevac\\
Freie Universt\"at Berlin\\
Arnimallee 6 \\
14195 Berlin, Germany}
\email{adjurdjevac@zedat.fu-berlin.de}

\author[M.~Eisenmann]{Monika Eisenmann}
\address{Monika Eisenmann\\
  Centre for Mathematical Sciences\\
  Lund University\\
  P.O.\ Box 118\\
  221 00 Lund, Sweden}
\email{monika.eisenmann@math.lth.se}

\begin{abstract}
    In recent years, SPDEs have become a well-studied field in mathematics. With their increase in popularity, it becomes important to efficiently approximate their solutions. Thus, our goal is a contribution towards the development of efficient and practical time-stepping methods for SPDEs.
	 
    Operator splitting schemes are a powerful tool for deterministic and stochastic differential equations. An example is given by domain decomposition schemes, where we split the domain into sub-domains. Instead of solving one expensive problem on the entire domain, we deal with cheaper problems on the sub-domains. This is particularly useful in modern computer architectures, as the sub-problems may often be solved in parallel. While splitting methods have already been used to study domain decomposition methods for deterministic PDEs, this is a new approach for SPDEs. 
    	
    We provide an abstract convergence analysis of a splitting scheme for stochastic evolution equations and state a domain decomposition scheme as an application of the setting. The theoretical results are verified through numerical experiments.
\end{abstract}

\keywords{Stochastic partial differential equations, operator splitting, domain decomposition.}
\subjclass[2010]{65C30, 90C15, 65M55}

\thanks{
Part of this research took place while E. Buckwar was in Lund as Lise-Meitner Guest Professor.
A. Djurdjevac's research has been partially supported by Deutsche Forschungsgemeinschaft (DFG) through grant CRC 1114 Scaling Cascades in Complex Systems, Project Number 235221301, Project C10.
M. Eisenmann has been partially supported by Vetenskapsr\aa{}det (VR) through Grant No. 2023-03930.
The computations were enabled by resources provided by LUNARC}

\maketitle

\section{Introduction}
	
Modeling with partial differential equations (PDEs) is ubiquitous in many areas of natural sciences and engineering. Phenomena described by PDEs include models of vibrations or waves, flows of liquids or the distribution of heat. In all such models, one may take uncertainty of parameters, natural variability of evolution or extrinsic noise into account, arriving at random PDEs (RPDEs) or stochastic PDEs (SPDEs). In the former case, only parameters or initial or boundary data are described by random variables or processes, whereas in the latter case the new type of PDEs involves a driving noise, often an infinite dimensional Wiener process. Applications employing SPDEs include neuroscience \cite{lord2013,stannat2016}, micro-ferro-magnetism \cite{alouges2014,banas2014}, reservoir engineering and geo-statistical climate models \cite{geiger2012,lindstroem2011}. 
Design and analysis of numerical methods to approximate solutions of SPDEs or quantities of interest constitute an active area of research, we refer to early articles \cite{ gyoengy1998,hausenblas2002,shardlow1999}, and the book \cite{LordEtAl.2014}. However, compared to the level of theoretical and practical development of numerical methods for PDEs, this area is still in its infancy. In this article, we are in particular interested in the topic of domain decomposition, which is a standard tool for the parallelisation of numerical algorithms and thus speed-up, as well as treatment of multi-physics description systems. 
To the best of our knowledge, for SPDEs this topic is only addressed in \cite{Prohl.2012, Ji.2023, zhang2008}. While in \cite{zhang2008} the problem considered differs from our work as it is only concerned with  stationary models, the papers \cite{Prohl.2012, Ji.2023} also consider time-dependent problems. However, their underlying equations are either less general, in \cite{Prohl.2012} the stochastic heat equation is considered, or in \cite{Ji.2023} they consider the nonlinear Schr\"odinger equation which is of  a different kind.
In contrast to the very few works on domain decomposition approaches for SPDEs, there exist several articles treating RPDEs, e.g., \cite{cong2014,jin2007}.

As stated in \cite{MathewBook2008}, domain decomposition methods can be described as a class of divide-and-conquer methods for the numerical approximation of solutions of PDEs. A standard approach for a decomposition framework, called hybrid formulation in \cite{MathewBook2008}, consists of the following steps. First the domain of the PDE is decomposed into (overlapping or non-overlapping) sub-domains, then, on each sub-domain, a local PDE problem is formulated and neighbouring local problems are coupled by matching conditions. Each local problem must be well-defined and uniquely solvable, as well as consistent with the global PDE problem. Overall, the coupled system of PDEs is required to be equivalent to the original problem and the global solution can be found by summing up the local solutions using an appropriate partition of unity. Variations of this setting include Schwarz iterative methods or Lagrange Multiplier methods in the context of optimization problems. This approach works directly for time-independent PDEs, while in the case of parabolic PDEs, usually the first step is to apply an implicit time discretization method. For each time-step one obtains a time-independent problem, to which a domain decomposition method can be applied.

Another divide-and-conquer approach to solving differential equations (usually time-dependent problems) is a splitting method, we refer to \cite{GlowinskiEtAl.2016, MclachlanQuispel.2002} for overviews in the deterministic case and very exemplary to \cite{Ableidinger2016,AlamoSanzSerna.2016, Brehier.2023,Shardlow.2003} for the case of stochastic differential equations. 

Domain decomposition operator splittings, which were introduced already in 1989 by Vabishchevich \cite{Vabishchevich.1989} and called regionally additive schemes there, have been discussed recently in various contexts, see \cite{EisenmannHansen.2018,EisenmannHansen.2022, EisenmannStillfjord.2022, Geiser.2009, HansenHenningsson.2017}. The main idea, in contrast to the approach to domain decomposition described above, is to first decompose or split the differential operators and right-hand side of the PDE into a sum of component terms, where each one is defined on a sub-domain. Therefore, each sub-operator represents the vector field of the global PDE on its corresponding sub-domain. 
These split operators can then directly be incorporated in the time-integration through an operator splitting. Thus, an advantage of this formulation is that the domain decomposition and its corresponding error is directly combined in the time-integration strategy. In contrast to this, the previous approach only uses the domain decomposition as a black box method to solve elliptic equations that arise in every time step after the application of a time-stepping method. In this process of solving the arising elliptic equations, a domain decomposition method is usually used as an iterative method that can require additional computational costs due to the iterations. These additional iterations do not appear in our approach.

The purpose of this article is to extend this approach to certain class of stochastic evolution equations 
for the first time. In particular, we provide the theoretical background and analysis for a domain decomposition operator splitting applied to a stochastic evolution equation
\begin{align*}
	\begin{cases}
		\diff{u(t)}
		= \tilde{A}(t,u(t)) \diff{t} + B(t,u(t)) \diff{W(t)}, \quad t\in(0,T], \\ 
		u(0) = u_0.
	\end{cases}
\end{align*}
More precisely, we are considering an abstract Cauchy problem with nonlinear monotone and coercive drift term, additional Lipschitz nonlinear term and multiplicative $Q$-Wiener noise. The equation is studied in the variational context which is well-suited for our numerical analysis, in particular for deriving a priori bounds. In order to  obtain the final error bound for the method, we require additional regularity of the solution. More precisely, we need the solution to be H\"older continuous in time. While this cannot be guaranteed for every admissible solution in our framework, we can state one slightly more restrictive setting where the needed regularity is available in the existing literature. Note that these additional assumptions are not needed, if the H\"older regularity of the solution can be obtained by other means.
The considered semi-discrete operator splitting scheme involves a decomposition of the space into sub-spaces and splitting all the operators on these sub-spaces. On every sub-space, the obtained split Cauchy problem is discretized in time using an implicit-explicit (IMEX) Euler--Maruyama method. 

For the method, we prove well-posedness of the stated implicit equations. The path-wise well-posedness follows directly from standard PDE theory and we also provide the measurability of the obtained solution. As a particular example of our scheme, we consider a domain decomposition method. Based on the derived bound of the semi-discrete solution, we provide mean-square convergence of our numerical method.
In the last part of the paper, we illustrate the obtained convergence with numerical experiments for a two-dimensional stochastic heat equation. Here, we consider both a linear equation with a multiplicative noise term and a semi-linear equation with additive noise.

The paper is structured as follows. In Section~\ref{sec:analytical_background}, we state the stochastic evolution equation considered in this paper with all the needed assumptions. Moreover, we specify the solution concept and state a restricted setting where we can provide addtional regualrity of the solution. With this at hand we can then state the semi-discretized scheme in Section~\ref{sec:scheme}. More precisely, we give a detailed explanation about the setup of the scheme in Section~\ref{subsec:derivation_scheme}, prove the well-posedness of the implicit equation in Section~\ref{subsec:well_posedness} and conclude the section with a our main application for the splitting which is a domain decomposition of an SPDE. The main theoretical result of this paper can be found in Section~\ref{sec:error_analysis}. While we begin with some technical details needed for the error analysis in the first part of the section, we state our error bound in Section~\ref{subsec:error_bound}. This error bound is exemplified through a numerical experiment in Section~\ref{sec:numerical_experiment}. At the end of the paper, in Appendix~\ref{sec:appendix}, we collect some further auxiliary results needed in the analysis throughout this manuscript and state the details of the proof of a stated lemma.

\section{Analytical background} \label{sec:analytical_background}

In this section, we formally state the equation and gather the main assumptions concerning the setting we are interested in. 
In the following, let $T \in (0, \infty)$ be a given final time point. We then denote the underlying filtered probability space by $(\Omega, \F, \{\F_t\}_{t\in [0,T]}, \P)$, which satisfies the usual conditions, and we assume that the $Q$-Wiener process $W$ is adapted  to the filtration $\{\F_t\}_{t\in [0,T]}$. We then consider the stochastic abstract Cauchy problem:
\begin{align}\label{eq:StochEvEq}
	\begin{cases}
		\diff{u(t)}
		= \tilde{A}(t,u(t)) \diff{t} + B(t,u(t)) \diff{W(t)}, \quad t\in(0,T], \\ 
		u(0) = u_0,
	\end{cases}
\end{align}
where the operator $\tilde{A}(t,\cdot)$  consits of three different parts
\begin{align*}
	\tilde{A}(t, \cdot) = F(t, \cdot) + G(t) - A(t, \cdot)
\end{align*}
that we will explain in more detail in the following subsection. Decomposing the operator $\tilde{A}(t, \cdot)$ into these three parts provides a large degree of freedom for the numerical scheme explained in Section~\ref{subsec:derivation_scheme}. We will use the different parts of the operator to propose an implicit-explicit (IMEX) time-stepping method in combination with an operator splitting that can be used to describe a domain decomposition method.

\subsection{Assumptions} \label{subsec:assumptions} 

Let $(H, \inner[H]{\cdot}{\cdot}, \|\cdot\|_H)$ be a real, separable Hilbert space and let $(V, \|\cdot\|_V)$ be a real, separable, reflexive Banach space such that $V$ is continuously and densely embedded into $H$. In a standard way, we obtain the Gelfand triple $V \stackrel{d}{\hookrightarrow} H \cong H^* \stackrel{d}{\hookrightarrow} V^*$.
For $f \in V^*$ and $v \in V$ we denote the dual paring $\dualV{f}{v} = f(v)$, which extends the $H$-inner product.
In the following, we state the exact assumptions needed on the coefficients in \eqref{eq:StochEvEq}.

\begin{assumption} \label{ass:initialValue}
	The initial data $u_0\colon \Omega \to H$ is $\F_{0}$-measurable and belongs to $L^2(\Omega; H)$. 
\end{assumption}

\begin{assumption}\label{ass:opA}
	Let the nonlinear operator $A \colon [0,T] \times V \times \Omega \to V^*$ be progressively measurable. By writing $A(t,v)$ the map $\omega \mapsto A(t, v, \omega)$ is meant. Moreover, the following conditions hold:
	\begin{itemize}		
		\item[(i)] The operator $A(t, \cdot)$ is monotone and coercive uniformly in $t$, i.e.~there exist constants $\eta, \mu \in (0,\infty)$ such that
		\begin{align*}
			\dualV{A(t, v) - A(t, w)}{v - w} \geq \eta \|v - w\|_V^2, \quad
			\dualV{A(t, v)}{v} \geq \mu \|v \|_V^2
		\end{align*}
		almost surely for all $t\in [0,T]$, $v, w\in V$.
		\item[(ii)] The operator $A(t, \cdot)$ is bounded uniformly at $0 \in V$, $\nu_A$-H\"older continuous with respect to the first argument and Lipschitz continuous with respect to the second argument, i.e. there exist constants $\nu_A \in (0,\frac{1}{2}]$ and $K_A, L_A \in (0,\infty)$ such that
		\begin{align*}
			\|A(t, 0)\|_{V^*} \leq K_A, \quad 
			\|A(t, v) - A(\tau, w) \|_{V^*} \leq L_A \big(|t - \tau |^{\nu_A} + \|v - w \|_V\big)
		\end{align*}
		almost surely for $t, \tau \in [0,T]$ and $v, w\in V$.
	\end{itemize}
\end{assumption}

\begin{assumption}\label{ass:F}
	Let the nonlinear operator $F \colon [0,T] \times H \times \Omega \to H$ and the function $G \colon [0,T] \times \Omega \to V^*$ be progressively measurable. Again, by writing $F(t,v)$ and $G(t)$ the maps $\omega \mapsto F(t, v, \omega)$ and $\omega \mapsto G(t, \omega)$ are meant. Moreover, the operator $F(t,\cdot)$ is bounded uniformly at $0 \in H$, $\nu_F$-H\"older continuous with respect to the first variable and Lipschitz continuous with respect to the second variable, i.e. there exist constants $\nu_F \in (0,\frac{1}{2}]$ and $K_F, L_F \in [0,\infty)$ such that
	\begin{align*}
		\| F(t,0) \|_H \leq K_{F}, \quad  \|F(t,v)-F(\tau,w)\|_H \leq L_{F} \big(|t-\tau|^{\nu_F} + \|v-w\|_H\big)
	\end{align*}
	almost surely for $t, \tau\in [0,T]$ and $v, w\in H$. Moreover, there exists $\nu_G \in (0,\frac{1}{2}]$ such that the function $G$ is an element of $C^{\nu_G} ([0,T];L^2(\Omega;V^*))$.
\end{assumption}

In the following, we denote the space of linearly bounded operators from a Hilbert space $(H_1, \inner[H_1]{\cdot}{\cdot}, \|\cdot\|_{H_1})$ into a second Hilbert space $(H_2, \inner[H_2]{\cdot}{\cdot}, \|\cdot\|_{H_2})$ by $\L(H_1, H_2)$. The space is equipped with the norm $\|S\|_{\L(H_1,H_2)} = \sup_{\|v\|_{H_1} = 1} \|S v\|_{H_2}$.
Moreover, by $\mathcal{HS}(H_1,H_2)$ we denote the Hilbert space of Hilbert--Schmidt operators equipped with the norm $\|S\|_{\mathcal{HS}(H_1,H_2)} = \sqrt{ \text{tr} (S^*S)}$, where $\text{tr}$ is the trace operator.

\begin{assumption}\label{ass:W}
	Let $Q \in \L(H)$ be a self-adjoint, positive operator with finite trace whose eigenfunctions $\{\psi_{k}\}_{k \in \N}$ form  an orthonormal basis of $H$. The corresponding eigenvalues of $Q$ are denoted by $\{ q_{k}\}_{k \in \N}$. With a family $\{\beta_{k}\}_{k \in \N}$ of independent scalar Brownian motions on the probability space $(\Omega, \F, \{\F_t\}_{t \in [0,T]}, \P)$, the $Q$-Wiener noise can be stated through
	\begin{align*}
		W(t) = \sum_{k=1}^{\infty} q_{k}^{\frac{1}{2}} \psi_{k} \beta_{k}(t), 
	\end{align*}
	almost surely for $t \in [0,T]$.
\end{assumption}

\begin{assumption}\label{ass:B}
	For $L_2^0 := \mathcal{HS}(Q^{\frac{1}{2}} (H), H)$, let the $B \colon [0,T] \times H \times \Omega \to L_2^0$ be progressively measurable. Again, by writing $B(t,v)$ the map $\omega \mapsto B(t, v, \omega)$ is meant. Moreover, the operator $B(t,\cdot)$ is bounded uniformly at $0 \in H$, $\nu_B$-H\"older continuous with respect to the first variable and Lipschitz continuous with respect to the second variable, i.e. there exist constants $\nu_B \in (0,\frac{1}{2}]$, $K_B, L_B \in [0,\infty)$ such that
	\begin{align*}
		\| B(t,0) \|_{L_2^0} \leq K_{B}, \quad  \|B(t,v)-B(\tau,w)\|_{L_2^0} \leq L_{B} \big(|t-\tau|^{\nu_B} + \|v-w\|_H \big),
	\end{align*}
	almost surely for $t,\tau\in [0,T]$ and $v,w\in H$.
\end{assumption}

\subsection{Solution concept and additional regularity} \label{subsec:solution_regularity}

With the exact assumptions at hand, we can state that the stochastic evolution equation \eqref{eq:StochEvEq} has a unique variational solution. Moreover, to prove our error bounds, we also need additional H\"older regularity of the solution. While the numerical scheme is well-defined in the broader framework explained above, this setting does not guarantee the needed regularity for the error bounds. We can provide the additional regularity requirements for a subclass which we explain later in this subsection.

Let us first recall what is meant by a variational solution, as stated for example in \cite{LiuRoeckner.2015}. An $H$-valued adapted continuous process $\{u(t)\}_{t \in [0,T]}$ is a variational solution to Equation \eqref{eq:StochEvEq} on $[0,T]$ if  
\begin{align}\label{eq:exactSolVbound}
	\int_0^T \|u(t) \|^2_V \diff{t} < \infty
\end{align}
and 
\begin{equation*}
	(u(t), v)_H = (u_0, v)_H + \int_0^t \dualV{\tilde{A}(\tau, u(\tau))}{v} \diff{\tau} 
	+ \int_0^t \inner[H]{B(\tau, u(\tau))\diff{W(\tau)}}{v}  
\end{equation*}
holds almost surely for all $t \in [0,T]$ and $v \in V$.

\begin{lemma} \label{lem:existence}
	Let Assumptions~\ref{ass:initialValue}--\ref{ass:B} be fulfilled. Then the stochastic evolution equation \eqref{eq:StochEvEq} has a unique (up to $\P$-indistinguishability) variational solution that fulfills
	\begin{align}\label{eq:exactSolHbound}
		\E \Big[ \sup_{t \in [0,T]} \|u(t)\|_H^2 \Big].
	\end{align}
\end{lemma}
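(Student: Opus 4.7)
The plan is to apply the standard variational existence theorem for monotone SPDEs (e.g.\ Theorem~4.2.4 in \cite{LiuRoeckner.2015}) to the composite drift $\tilde A(t,\cdot) = F(t,\cdot) + G(t) - A(t,\cdot)$ and the diffusion $B$. This requires verifying the four classical conditions — hemicontinuity, weak monotonicity, coercivity, and a $V^*$-growth bound — after which the cited theorem produces existence, uniqueness, and the moment bound \eqref{eq:exactSolHbound} simultaneously.

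Hemicontinuity of $\tilde A(t,\cdot)$ follows immediately from the Lipschitz continuity of $A(t,\cdot)$ on $V$ and of $F(t,\cdot)$ on $H$. For weak monotonicity, I would split
\begin{align*}
	\dualV{\tilde A(t,v) - \tilde A(t,w)}{v-w}
	\;=\; (F(t,v)-F(t,w), v-w)_H - \dualV{A(t,v) - A(t,w)}{v-w},
\end{align*}
bound the first term by $L_F\|v-w\|_H^2$ via Assumption~\ref{ass:F}, the second by $-\eta\|v-w\|_V^2$ via Assumption~\ref{ass:opA}(i), and add the contribution $L_B^2\|v-w\|_H^2$ of $B$ from Assumption~\ref{ass:B}. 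For coercivity, Young's inequality applied to the $F$- and $G$-terms absorbs any $\|v\|_V$-factor into the negative $-\mu\|v\|_V^2$ coming from $A$, yielding
\begin{align*}
	2\dualV{\tilde A(t,v)}{v} + \|B(t,v)\|_{L_2^0}^2 \;\le\; -\mu\|v\|_V^2 + C\bigl(1 + \|v\|_H^2 + \|G(t)\|_{V^*}^2\bigr),
\end{align*}
while Assumption~\ref{ass:F} ensures that $\|G(\cdot)\|_{V^*}^2$ is integrable in $t$ with finite second moment in $\omega$. The $V^*$-growth bound $\|\tilde A(t,v)\|_{V^*}\le C(1+\|v\|_V) + \|G(t)\|_{V^*}$ is then immediate from Assumptions~\ref{ass:opA}(ii), \ref{ass:F} and the embedding $H\hookrightarrow V^*$.

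With the four conditions verified, \cite{LiuRoeckner.2015} produces a unique $H$-valued continuous adapted process $u$ satisfying the variational identity, the integrability~\eqref{eq:exactSolVbound}, and the moment bound~\eqref{eq:exactSolHbound}. Should a self-contained derivation of~\eqref{eq:exactSolHbound} be preferred, one can apply It\^o's formula to $\|u(t)\|_H^2$, substitute the coercivity estimate above, use the Burkholder--Davis--Gundy inequality to control the supremum of the martingale part, and close with Gronwall's lemma. The main subtle point is the mixed functional setting: $A$ acts $V\to V^*$ while $F$ acts only $H\to H$, so pairings of the form $(F(t,v), v-w)_H$ must be carefully interpreted through the Gelfand triple. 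Since these pairings only ever produce $H$-norm terms, however, they are absorbed by the weak monotonicity/coercivity estimates and no genuinely new analytic difficulty arises beyond bookkeeping.
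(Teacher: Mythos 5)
Your proposal is correct and follows essentially the same route as the paper, which likewise obtains existence, uniqueness, and the moment bound \eqref{eq:exactSolHbound} by invoking \cite[Theorem~4.2.4]{LiuRoeckner.2015}. The paper simply cites this result without spelling out the verification of hemicontinuity, weak monotonicity, coercivity, and the growth bound, so your additional checks are a harmless (and accurate) elaboration rather than a different argument.
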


The proof is a standard result of well-posedness of variational solution of stochastic evolution equation, that even holds with weaker assumptions on $\tilde{A}$ and $B$, and can be found for example in \cite[Theorem~4.2.4]{LiuRoeckner.2015}.

Given the next assumption, we can provide additional regularity on the solution. This regularity is needed in our error bound in Theorem~\ref{thm:errorBound}. 

\begin{assumption}\label{ass:regularity}
	Let Assumptions~\ref{ass:initialValue}--\ref{ass:B} be fulfilled. Additionally, the following stronger conditions hold:
	\begin{itemize}
		\item[(i)]  Let  $A \colon V \to V^*$ be  a linear operator. Moreover, let $H^2 := \{ v \in V : A(v) \in H\}$ be a dense subset of $H$ such that it is a complete space with respect to the norm $\|\cdot\|_{H^2}$ and let $A \colon H^2 \subset H \to H$  be a self-adjoint and positive definite operator. The inverse of $A$ in $H$ is assumed to be compact.
		
		\item[(ii)] The initial value $u_0 \colon \Omega \to H^2$ is $\F_0$-measurable with $\E[ \|u_0\|_{H^{2}}^2] < \infty$.
		\item[(iii)] There exists $\nu_r \in [0,1)$ such that the mapping $B(t,\cdot)$ preserves additional regularity in the sense that
		\begin{align*}
			\|A^{\frac{\nu_r}{2}} B(t, v)\|_{L_{2}^0} \leq C( 1 + \|A^{\frac{\nu_r}{2}}v\|_H)
		\end{align*}
		for every $v \in H$ with $\|A^{\frac{\nu_r}{2}}v\|_H < \infty$.
	\end{itemize}
\end{assumption}

\begin{theorem} \label{thm:regularity}
	Let Assumptions~\ref{ass:initialValue}--\ref{ass:regularity}  be fulfilled. Then the solution of Equation \eqref{eq:StochEvEq} is H\"older continuous in time with values in $L^2(\Omega;V)$. More precisely, for $\nu_r \in [0,1)$ from Assumption~\ref{ass:regularity}, it follows that
	\begin{align*}
		\sup_{t, \tau \in [0,T], t \neq \tau} \frac{(\E[\| u(t) - u(\tau) \|_{V}^2])^{\frac{1}{2}}}{|t - \tau|^{\min (\frac{1}{2}, \frac{\nu_r}{2})}} < \infty.
	\end{align*}
\end{theorem}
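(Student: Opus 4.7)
The plan is to represent $u$ as a mild solution with respect to the analytic semigroup generated by $-A$ and to estimate the time increment $u(t) - u(\tau)$ in the $V$-norm by standard semigroup smoothing. By Assumption~\ref{ass:regularity}(i), $-A$ generates a self-adjoint analytic semigroup $\{S(r)\}_{r \ge 0}$ on $H$, the fractional powers $A^{\alpha}$ are well-defined, and $D(A^{\frac{1}{2}}) = V$ with $\|v\|_V \sim \|A^{\frac{1}{2}} v\|_H$ in view of the coercivity in Assumption~\ref{ass:opA}. This provides the classical bounds $\|A^{\alpha} S(r)\|_{\L(H,H)} \le C r^{-\alpha}$ for $r > 0$, $\alpha \ge 0$, and $\|(S(r) - I) A^{-\gamma}\|_{\L(H,H)} \le C r^{\gamma}$ for $r \ge 0$, $\gamma \in [0,1]$.

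A first intermediate step is to show that the variational solution $u$ coincides with the mild solution
\begin{equation*}
u(t) = S(t) u_0 + \int_0^t S(t-s)\bigl[F(s,u(s)) + G(s)\bigr] \diff{s} + \int_0^t S(t-s) B(s, u(s)) \diff{W(s)}
\end{equation*}
and has the additional spatial regularity $\sup_{t \in [0,T]} \E[\|A^{\frac{\nu_r}{2}} u(t)\|_H^2] < \infty$. The first assertion is standard under our assumptions; the second is obtained via a Galerkin approximation along the eigenbasis of $A$, applying It\^o's formula to the approximate solutions against $\|A^{\frac{\nu_r}{2}} \cdot\|_H^2$, using Assumption~\ref{ass:regularity}(ii)--(iii) together with the uniform bounds on $F$, $G$, $B$ and Gr\"onwall's lemma, and passing to the limit.

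For the increment, fix $0 \le \tau < t \le T$, write $u(t) - u(\tau)$ as the sum of five terms, namely the initial-value contribution $(S(t) - S(\tau))u_0$, the two deterministic integrals over $(0,\tau)$ and $(\tau,t)$, and the two stochastic integrals over these sub-intervals, and bound each in the norm $\|A^{\frac{1}{2}} \cdot\|_H$. For the initial value, the decomposition $A^{\frac{1}{2}}(S(t-\tau) - I) u_0 = (S(t-\tau) - I) A^{-\frac{1}{2}} \cdot A u_0$ together with $\E[\|A u_0\|_H^2] < \infty$ produces a rate $(t-\tau)^{\frac{1}{2}}$. The deterministic integrals are controlled by the smoothing estimate, the H\"older continuity of $G$, and the bound $\|F(s,u(s))\|_H \le K_F + L_F \|u(s)\|_H$ combined with Lemma~\ref{lem:existence}. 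The critical terms are the stochastic integrals: for the integral over $(\tau,t)$, It\^o's isometry and the factorisation $A^{\frac{1}{2}} S(t-s) B = A^{\frac{1-\nu_r}{2}} S(t-s) \cdot A^{\frac{\nu_r}{2}} B$ combined with Assumption~\ref{ass:regularity}(iii) and the regularity established above yield a bound of order $(t-\tau)^{\nu_r}$; for the integral over $(0,\tau)$, the factorisation $A^{\frac{1}{2}}(S(t-\tau) - I) S(\tau - s) = [A^{\frac{1}{2}}(S(t-\tau) - I) A^{-\alpha}] \cdot [A^{\alpha - \frac{\nu_r}{2}} S(\tau - s)] \cdot A^{\frac{\nu_r}{2}}$ with $\alpha$ chosen slightly above $\frac{1}{2}$ and below $\frac{1+\nu_r}{2}$ produces, after integration in $s$, the same rate $(t-\tau)^{\nu_r}$. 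Taking square roots gives the claimed $\min(\frac{1}{2}, \frac{\nu_r}{2})$-H\"older exponent.

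The main obstacle is the additional spatial regularity $\sup_t \E[\|A^{\frac{\nu_r}{2}} u(t)\|_H^2] < \infty$: It\^o's formula is not directly applicable to the functional $v \mapsto \|A^{\frac{\nu_r}{2}} v\|_H^2$ in the variational framework for fractional $\nu_r$, so a careful approximation argument is required. Since the paper indicates that this regularity is available in the existing literature, the proof can appeal to the corresponding published result rather than reproduce the technical details.
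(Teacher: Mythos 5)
The paper does not actually prove Theorem~\ref{thm:regularity}: it defers entirely to the literature, citing Theorem~4.1 of Kruse--Larsson (autonomous case) and Theorem~2.31 of Kruse's monograph (non-autonomous case), together with the equivalence of mild and variational solutions. Your proposal reconstructs essentially the argument used in those references --- pass to the mild formulation, identify $V = D(A^{\frac{1}{2}})$ via coercivity and self-adjointness, establish $\sup_{t}\E[\|A^{\frac{\nu_r}{2}}u(t)\|_H^2]<\infty$, and estimate the five increment terms by semigroup smoothing --- so in substance it is the same approach, and the outline is sound. One technical point needs sharpening: for the stochastic convolution over $(0,\tau)$, your factorisation with an intermediate exponent $\alpha\in(\tfrac{1}{2},\tfrac{1+\nu_r}{2})$ gives, after integration, the rate $(t-\tau)^{2\alpha-1}$ with $2\alpha-1$ strictly below $\nu_r$ (the choice $\alpha=\tfrac{1+\nu_r}{2}$ makes the singularity $(\tau-s)^{-2\alpha+\nu_r}$ non-integrable), so as written you only obtain H\"older exponents strictly below $\tfrac{\nu_r}{2}$ rather than the stated $\min(\tfrac12,\tfrac{\nu_r}{2})$. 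To reach the critical index one should integrate in $s$ before taking operator norms, using the spectral representation of the self-adjoint operator $A$: for instance
\begin{align*}
\int_0^{\tau}\big\|(S(t-\tau)-I)A^{\frac{1-\nu_r}{2}}S(\tau-s)x\big\|_H^2 \diff{s}
\leq \tfrac{1}{2}\sum_{j}\big(1-e^{-\lambda_j(t-\tau)}\big)^2\lambda_j^{-\nu_r}|x_j|^2
\leq \tfrac{1}{2}(t-\tau)^{\nu_r}\|x\|_H^2,
\end{align*}
which follows from $(1-e^{-\lambda r})^2\leq(\lambda r)^{\nu_r}$. This is exactly how the cited references obtain the sharp exponent; with that replacement your argument matches theirs.
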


A proof for autonomous coefficients can be found in \cite[Theorem~4.1]{KruseLarsson.2012} and for non-autonomous coefficients in \cite[Theorem~2.31]{Kruse.2014}. Note that in the cited papers mild solutions are considered. However, in our setting, results regarding equivalence of mild and variational solutions hold, see for example \cite[Section~7.5]{veraar2006stochastic}.

\section{A semi-discrete operator splitting scheme}
\label{sec:scheme}
In the following subsections, we will explain the scheme we choose for the approximation of the solution of \eqref{eq:StochEvEq}. We begin with a derivation and exact statement of the method in Section~\ref{subsec:derivation_scheme}. Here, we will use an IMEX scheme with respect to the different parts of $\tilde{A}(t, \cdot)$ and decompose both $\tilde{A}(t, \cdot)$ and $B(t, \cdot)$ further with a domain decomposition in mind. The derivation of the scheme is followed by a justification of why the appearing implicit equations are well-posed. In Section~\ref{subsec:domain_decomp}, we exemplify the splitting with our main application for the method and explain how a domain decomposition approach fits into the setting.

\subsection{Derivation of method}
\label{subsec:derivation_scheme}

In the following, we describe the strategy of the method that we will analyze in this paper to approximate the solution of \eqref{eq:StochEvEq}. The main idea behind the method can be summarized into three steps that we will explain in more detail throughout this subsection.

\begin{itemize}
	\addtolength{\itemindent}{0.5cm}
	\item[\textbf{Step 1:}] We split the data. More precisely, we decompose the space int sub-spaces and all operators into sub-operators on these sub-spaces.
	
	\item[\textbf{Step 2:}] We introduce a time discretization for every split abstract Cauchy problem. As a method, we choose an IMEX-Euler--Maruyama method. 
	
	\item[\textbf{Step 3:}] We combine the numerical solutions to the sub-problems using the Lie-splitting scheme and obtain a an approximation of the original problem. 
\end{itemize}
Combining these steps, we can define an operator splitting method for stochastic evolution equations \eqref{eq:StochEvEq}. In the following, let us explain \textbf{Step 1}--\textbf{3} in more detail.

\textbf{Step 1:}
We split the equation \eqref{eq:StochEvEq} into $s \in \N$ sub-equations. To this end, we begin by splitting the space $V$ into a sequence of sub-spaces. More precisely, for $\ell \in \{1,\dots,s\}$, let $(V_{\ell}, \|\cdot\|_{V_{\ell}})$ be real, reflexive, seperable Banach spaces, which are continuously and densely embedded into $H$, such that $\bigcap_{\ell = 1}^{s} V_{\ell} = V$ and $\sum_{\ell  = 1}^{s}\|\cdot\|_{V_s}$ is equivalent to $\|\cdot\|_V$. 
On these spaces we can now define split versions of the operators $A$, $F$, $B$ and the function $G$. 

\begin{assumption}\label{ass:Aell}
	Let the operators $A \colon [0,T] \times V \times \Omega \to V^*$, $F \colon [0, T]  \times H \times \Omega \to H$, $G \colon [0, T] \times \Omega \to V^*$ and $B \colon [0,T] \times H \times \Omega \to L_2^0$ be given as in Assumptions~\ref{ass:opA}, \ref{ass:F} and \ref{ass:B}, respectively. 
	For $\ell \in \{1,\dots,s\}$, let $A_{\ell}\colon [0,T] \times V_{\ell} \times \Omega \to  V_{\ell}^*$ also fulfill Assumption~\ref{ass:opA}, with $V$ replaced by $V_{\ell}$ and $F_{\ell} \colon [0, T]  \times H \times \Omega \to H$, $G_{\ell} \colon [0, T] \times \Omega \to V_{\ell}^*$ and $B_{\ell} \colon [0,T] \times H \times \Omega \to L_2^0$ fulfill Assumptions \ref{ass:F} and \ref{ass:B}, respectively. 
	For simplicity, assume that all the constants from Assumptions~\ref{ass:opA}, \ref{ass:F} and \ref{ass:B} are the same for both the unsplit and the split operators. 
	
	Moreover, the split operators fulfill the sum property
	\begin{align*}
		\sum_{\ell=1}^{s} A_{\ell}(t, v, \omega) &= A(t, v, \omega) \quad \text{in } V^*, \quad 
		\sum_{\ell=1}^{s} F_{\ell}(t, w, \omega) = F(t, w, \omega) \quad \text{in } H, \\
		\sum_{\ell=1}^{s} G_{\ell}(t, \omega) &= G(t, \omega) \quad \text{in } V^*, \quad 
		\sum_{\ell=1}^{s} B_{\ell}(t, w, \omega) = B(t, w, \omega) \quad \text{in } L_2^0
	\end{align*}
	for all $t \in [0,T]$, $v \in V$, $w \in H$ and almost all $\omega \in \Omega$.
\end{assumption}

Using the split data, we can define $s$ separate stochastic evolution equations of the form
\begin{align}\label{eq:SplitStochEvEq}
	\begin{cases}
		\diff{u_{\ell}(t)}
		= \tilde{A}_{\ell}(t,u_{\ell}(t)) \diff{t} + B_{\ell}(t,u_{\ell}(t)) \diff{W(t)}, \quad t\in(0,T], \\ 
		u_{\ell}(0) = u_0,
	\end{cases}
\end{align}
with $\tilde{A}_{\ell} = F_{\ell} + G_{\ell} - A_{\ell}$.

Note that in the following, we do not split the noise $W$ into $s$ parts as this is not necessary for the domain decomposition application that we have in mind. Here, every split operator only acts on a subset of the spatial domain $\D$ of the given SPDE. Thus, the split diffusion operator $B_{\ell}$ is only non-zero on one part of the spatial domain and the product $B_{\ell}(t, u(t)) \diff{W(t)}$ therefore only acts on this part of the domain even if the noise still acts on the entire domain. We allow that the noise truncation parameter $K_{\ell} \in \N$ used in \textbf{Step 2} is different on every part of the domain. This means that we may approximate the noise more or less accurately on the different domains.

\textbf{Step 2:} We propose a time-discretization of Equation \eqref{eq:StochEvEq} using the IMEX-Euler--Maruyama method, i.e. the scheme is implicit with respect to $A_{\ell}$ 
and explicit with respect to $F_{\ell}$ and $B_{\ell}$.
We define the time grid $0 = t_0 < \dots < t_N = T$ with $h_n = 
t_{n+1} - t_n$ such that $h_n \leq h$ for all $n \in \{1,\dots,N\}$, where $h \in (0,T)$ is the maximal step size.  To obtain an approximation of the Wiener process, we consider the truncation of the series representation of the form
\begin{align*}
	W^{K_{\ell}}(t) = \sum_{k=1}^{K_{\ell}} q_{k}^{\frac{1}{2}} \psi_{k} \beta_{k}(t)
\end{align*}
for $t \in [0,T]$ with $K_{\ell} \in \N$. Further, we denote the increments by $\Delta W^{n, K_{\ell}} = W^{K_{\ell}}(t_{n}) - W^{K_{\ell}}(t_{n-1})$.
We apply the IMEX-Euler--Maruyama scheme for each $\ell$-th sub-problem as 
\begin{align}\label{eq:pre_schem}
	\begin{cases}
		U_{\ell}^n - U_{\ell}^{n-1} + h_n A_{\ell}(t_n, U_{\ell}^n) = h_n F_{\ell}(t_{n-1}, U_{\ell}^{n-1}) + h_n G_{\ell}(t_n)\\
		\hspace{5.0cm}+ B_{\ell}(t_{n-1}, U^{n-1}) \Delta W^{n, K_{\ell}}
		&\text{in } 
		V_{\ell}^*,\\
		U_{\ell}^0 = u_0 &\text{in } H
	\end{cases}
\end{align}
for $n \in \{1,\dots,N\}$.

\textbf{Step 3:}
The last step is to find an approximation $U^n$ of the original problem \eqref{eq:StochEvEq}. In \eqref{eq:pre_schem}, we so far only approximated the split problems \eqref{eq:SplitStochEvEq} individually but there is no interaction between the sub-problems. This interaction we will now introduce using the Lie-splitting method. The main difference compared to \eqref{eq:pre_schem} becomes that we solve the sub-problems after one another and use the solution of one as the initial value for the next problem. This can be seen in the second line of the scheme
\begin{align}\label{eq:scheme}
	\begin{cases}
		U_{\ell}^n - U_{\ell-1}^{n} + h_n A_{\ell}(t_n, U_{\ell}^n) = h_n F_{\ell}(t_{n-1}, U_{\ell-1}^{n}) + h_n G_{\ell}(t_n)\\
		\hspace{5.0cm}+ B_{\ell}(t_{n-1}, U^{n-1}) \Delta W^{n, K_{\ell}}, 
		&\text{in } 
		V_{\ell}^*,\\
		U^n = U_s^n = U_{0}^{n+1} &\text{in } H,\\
		U_0^1 = u_0 &\text{in } H
	\end{cases}
\end{align}
for $\ell \in \{1,\dots,s\}$ and $n \in \{1,\dots,N\}$.
In \eqref{eq:scheme}, we solve $s$ sub-problems after each other within one time step and find an approximation 
\begin{align*}
	U^n = U_s^n \approx u(t_n)    
\end{align*}
for every $n \in \{1,\dots,N \}$. 

\subsection{Well-posedness of the implicit equation}
\label{subsec:well_posedness}

Our numerical scheme defined in \eqref{eq:scheme} is an implicit method in $U_{\ell}^n$. For such a method, it remains to verify that it admits a unique solution. In our case, we need to prove that there exists a unique solution $U_{\ell}^n \colon \Omega \to H$ that is $\F_{t_n}$-measurable. 

The first step is to show that for a fixed $\omega \in \Omega$, there exists a unique element $U_{\ell}^n(\omega) \in V_{\ell}$ that solves the path of \eqref{eq:scheme}. For this we can apply a standard tool from deterministic PDE theory. Since $I + h_n A_{\ell}(t_n, \cdot, \omega)$ is bounded, Lipschitz continuous, strictly monotone and coercive, we can apply a variant of the Browder--Minty theorem from \cite[Theorem~2.14]{Roubicek.2013} and obtain a unique solution $U_{\ell}^n(\omega) \in V_{\ell}$.

However, the Browder-Minty theorem does not provide any measurability results for the random variable $U_{\ell}^n$. Thus, we provide Lemma~\ref{lem:hMeasurable} below for this purpose. 
Rearranging the first equation in the scheme \eqref{eq:scheme}, its solution $U_{\ell}^n(\omega)$ is given by the root $U$ of the equation $\gamma(U, \omega)=0$, where the function $\gamma$ is defined by
\begin{align}\label{eq:DefAux_h}
	\begin{split}
		\gamma(U, \omega) & := U - U_{\ell-1}^{n}(\omega) + h_n A_{\ell}(t_n, U, \omega)
		- h_n F_{\ell}(t_{n-1}, U_{\ell-1}^{n}(\omega), \omega)\\
		&\quad - h_n G_{\ell}(t_n, \omega)
		- B_{\ell}(t_{n-1}, U^{n-1}(\omega), \omega) \Delta W^{n, K_{\ell}}(\omega).
	\end{split}
\end{align}
The result in the following lemma  ensures the measurability of the function that maps $\omega$ onto the root of $\gamma(\cdot, \omega)$.

\begin{lemma}\label{lem:hMeasurable}
	Given a probability space $(\Omega, \F, \mathcal{P})$ and a reflexive, real, seperable Banach space $\Xi$, let $\gamma \colon \Omega \times \Xi \to \Xi^*$ fulfill the following conditions, where  $\mathcal{N} \subset \Omega$ is a set with $\mathcal{P}(\mathcal{N}) = 0$:
	\begin{itemize}
		\item[(1)] The mapping $u \mapsto \dualX[\Xi]{\gamma(u, \omega)}{v}$ from $\Xi$ into $\R$ is continuous for every $v \in \Xi$ and $\omega \in \Omega \setminus \mathcal{N}$.
		\item[(2)] The mapping $\omega \mapsto \gamma(u, \omega)$ from $\Omega$ into $\Xi$ is $\F$-measurable for every $u \in \Xi$.
		\item[(3)] For every $\omega \in \Omega \setminus \mathcal{N}$, there exists a unique element $\U(\omega) \in \Xi$ such that $\gamma(\U(\omega), \omega) = 0$.
	\end{itemize}
	Consider the mapping $\U \colon \Omega \to \Xi$, $\omega 	\mapsto \U(\omega)$, where $\U(\omega)$ is the unique element in $\Xi$ described in (3) for $\omega \in \Omega \setminus \mathcal{N}$ and $\U(\omega) = 0$ for $\omega \in \mathcal{N}$. Then $\U$ is $\F$-measurable.
\end{lemma}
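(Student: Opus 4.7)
The plan is to verify $\F$-measurability of $\U$ by first showing that its graph is a measurable subset of $\Omega \times \Xi$, and then passing from graph measurability to measurability of the single-valued selector via a projection argument, exploiting the Polish structure of $\Xi$ and the $\mathcal{P}$-completeness of $\F$ implied by the usual conditions on the filtered probability space.

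First, I would fix a countable dense subset $\{v_k\}_{k \in \N} \subset \Xi$, which exists by separability, and consider, for each $k$, the scalar function
\begin{equation*}
\phi_k(\omega, u) := \dualX[\Xi]{\gamma(u, \omega)}{v_k}.
\end{equation*}
Condition (1) makes $u \mapsto \phi_k(\omega, u)$ continuous for $\omega \in \Omega \setminus \mathcal{N}$, while composing the continuous evaluation functional $\Xi^* \ni f \mapsto f(v_k)$ with the measurable map from condition (2) makes $\omega \mapsto \phi_k(\omega, u)$ $\F$-measurable for every $u \in \Xi$. Hence each $\phi_k$ is a real-valued Carath\'eodory function on the separable metric space $\Xi$, which, via the standard approximation-by-simple-functions argument, is jointly $\F \otimes \B(\Xi)$-measurable.

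Next, using the density of $\{v_k\}$ and the continuity of the dual pairing, $\gamma(u, \omega) = 0$ in $\Xi^*$ is equivalent to $\phi_k(\omega, u) = 0$ for every $k \in \N$. Combined with the uniqueness assertion (3), this means that for $\omega \in \Omega \setminus \mathcal{N}$ the graph of $\U$ can be written as
\begin{equation*}
\mathrm{Gr}(\U) \cap ((\Omega \setminus \mathcal{N}) \times \Xi) = \bigcap_{k \in \N} \phi_k^{-1}(\{0\}),
\end{equation*}
which therefore belongs to $\F \otimes \B(\Xi)$.

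Finally, for any open $O \subset \Xi$, uniqueness yields
\begin{equation*}
\{\omega \in \Omega \setminus \mathcal{N} : \U(\omega) \in O\} = \mathrm{proj}_\Omega\bigl(\mathrm{Gr}(\U) \cap ((\Omega \setminus \mathcal{N}) \times O)\bigr).
\end{equation*}
Since $\Xi$ is Polish (being a separable Banach space) and the graph is $\F \otimes \B(\Xi)$-measurable, the projection theorem shows that this set is analytic, hence measurable with respect to the $\mathcal{P}$-completion of $\F$, and therefore belongs to $\F$ under the standing usual conditions on the filtration. Setting $\U = 0$ on $\mathcal{N}$ (which is in $\F$ by completeness) preserves measurability, establishing the claim. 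The main obstacle is exactly this last step: because the lemma deliberately imposes no monotonicity, coercivity, or basis-amenable structure on $\gamma$, one cannot realize $\U$ as an explicit limit of Galerkin-type iterates whose measurability would be transparent, so the appeal to the projection theorem for analytic sets, together with $\mathcal{P}$-completeness of $\F$, is essentially unavoidable.
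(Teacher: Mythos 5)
The paper does not actually prove Lemma~\ref{lem:hMeasurable}; it only cites \cite{EisenmannPhD.2019}, so a line-by-line comparison is not possible. Judged on its own terms, your outline is largely sound: the reduction of $\gamma(u,\omega)=0$ to the countable system $\phi_k(\omega,u)=0$ is correct (a bounded linear functional vanishing on a dense set vanishes identically), each $\phi_k$ is indeed Carath\'eodory after redefining $\gamma$ on the null set $\mathcal{N}$, and joint $\F\otimes\mathcal{B}(\Xi)$-measurability of the graph follows. You are also right that, at the level of generality of the lemma --- no monotonicity, no coercivity --- one cannot localize the root by testing $\gamma$ on a countable dense set of points $u$, so some selection or projection argument is needed.

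The genuine gap is the last step. The measurable projection theorem delivers that $\{\omega\in\Omega\setminus\mathcal{N}:\U(\omega)\in O\}$ is analytic, hence measurable with respect to the $\mathcal{P}$-\emph{completion} of $\F$, not with respect to $\F$ itself. The lemma, however, is stated for an arbitrary probability space $(\Omega,\F,\mathcal{P})$ and asserts $\F$-measurability; no completeness hypothesis appears, and the "usual conditions" you invoke are a property of the filtration in Section~2, not of the abstract triple in the lemma. As written, your argument therefore proves a weaker statement than the one claimed, and the fix is not cosmetic: either completeness of $\F$ must be added to the hypotheses, or one must exploit structure absent from the lemma. In the paper's actual application the operator $u\mapsto u+h_nA_\ell(t_n,u,\omega)$ is monotone and hemicontinuous, so the Minty trick rewrites the graph as $\bigcap_j\{(\omega,u):\dualX[\Xi]{\gamma(w_j,\omega)}{w_j-u}\geq 0\}$ over a countable dense family $\{w_j\}$; each section in $u$ is closed and convex, and a measurable-selection argument (Kuratowski--Ryll-Nardzewski) then yields $\F$-measurability without any completeness assumption. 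You should either restrict the lemma by assuming $\F$ complete (harmless for the application, since under the usual conditions each $\F_{t_n}$ contains all $\mathcal{P}$-null sets) or switch to the monotonicity-based route; as it stands, the final sentence of your proof asserts membership in $\F$ that your argument does not establish.
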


A proof of the previous lemma can be found in \cite{EisenmannPhD.2019}.

\begin{lemma}
	Let Assumptions~\ref{ass:initialValue}--\ref{ass:B} and Assumption~\ref{ass:Aell} be fulfilled. Then the scheme \eqref{eq:scheme} has a unique solution $U_{\ell}^n$ that is in $V_{\ell}$ almost surely, such that $\omega \mapsto U_{\ell}^n(\omega)$ is $\F_{t_n}$-measurable.
\end{lemma}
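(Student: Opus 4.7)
The plan is to proceed by induction on the lexicographic pair $(n, \ell)$, with base case $U_0^1 = u_0$, which lies in $H$ and is $\F_0$-measurable by Assumption~\ref{ass:initialValue}. At each inductive step, one first establishes pathwise existence and uniqueness of $U_\ell^n(\omega) \in V_\ell$, and then upgrades the resulting pointwise map $\omega \mapsto U_\ell^n(\omega)$ to an $\F_{t_n}$-measurable random variable by invoking Lemma~\ref{lem:hMeasurable}. The inductive hypothesis asserts that $U_{\ell-1}^n$ is $\F_{t_n}$-measurable with values in $V_{\ell-1}$ (hence in $H$ via the continuous embedding) and that $U^{n-1}$ is $\F_{t_{n-1}}$-measurable with values in $H$.

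For the pathwise part, fix $\omega$ outside a $\P$-nullset and introduce the operator $\Phi_{n,\ell,\omega} \colon V_\ell \to V_\ell^*$ defined by $\Phi_{n,\ell,\omega}(u) := u + h_n A_\ell(t_n, u, \omega)$. Drawing on Assumption~\ref{ass:Aell} (and thus Assumption~\ref{ass:opA} applied to $A_\ell$), the operator $\Phi_{n,\ell,\omega}$ is bounded (combining the Lipschitz continuity of $A_\ell$ with the uniform bound at $0$), Lipschitz continuous, strictly monotone (since the identity is monotone in $H$ and $A_\ell$ is $\eta$-strongly monotone), and coercive on $V_\ell$ via
\begin{equation*}
\dualX[V_\ell]{\Phi_{n,\ell,\omega}(u)}{u} = \|u\|_H^2 + h_n \dualX[V_\ell]{A_\ell(t_n, u, \omega)}{u} \geq h_n \mu \|u\|_{V_\ell}^2 .
\end{equation*}
The variant of the Browder--Minty theorem already cited in the text (\cite[Theorem~2.14]{Roubicek.2013}) then delivers a unique solution $U_\ell^n(\omega) \in V_\ell$ of the first equation of \eqref{eq:scheme}, which is precisely the root of the map $\gamma(\cdot, \omega)$ defined in \eqref{eq:DefAux_h}.

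For the measurability part, apply Lemma~\ref{lem:hMeasurable} with $\Xi = V_\ell$ and the sub-$\sigma$-algebra $\F_{t_n}$ in place of $\F$. Condition~(1) is the continuity of $u \mapsto \dualX[V_\ell]{\gamma(u, \omega)}{v}$, which follows from the Lipschitz continuity of $A_\ell(t_n, \cdot, \omega)$. Condition~(3) is the pathwise existence and uniqueness just proved. Condition~(2) requires, for each fixed $u \in V_\ell$, the $\F_{t_n}$-measurability of $\omega \mapsto \gamma(u, \omega)$; this is handled term by term, using that progressive measurability of $A_\ell, F_\ell, G_\ell, B_\ell$ yields $\F_{s}$-measurability upon evaluation at any deterministic time $s$, combining with the inductive $\F_{t_n}$-measurability of $U_{\ell-1}^n$ and $\F_{t_{n-1}}$-measurability of $U^{n-1}$, and using that the truncated increment $\Delta W^{n, K_\ell}$ is $\F_{t_n}$-measurable since it depends only on finitely many scalar Brownian motions at times $t_{n-1}$ and $t_n$. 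An application of Lemma~\ref{lem:hMeasurable} then concludes that $U_\ell^n$ is $\F_{t_n}$-measurable as a $V_\ell$-valued random variable, closing the induction.

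The main obstacle is verifying condition~(2) of Lemma~\ref{lem:hMeasurable} cleanly: one has to track how each of the five $\omega$-dependent terms in \eqref{eq:DefAux_h} inherits $\F_{t_n}$-measurability, combining progressive measurability (which is a joint condition on $[0,t] \times \Omega$) with the inductively obtained measurabilities of $U_{\ell-1}^n$ and $U^{n-1}$. The remaining ingredients---the pathwise Browder--Minty argument and the closure of the induction---are routine consequences of the assumptions once the measurability bookkeeping is settled.
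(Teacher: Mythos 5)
Your proposal is correct and follows essentially the same route as the paper's proof: a (lexicographic) induction over the time steps and sub-steps, pathwise existence and uniqueness via the Browder--Minty variant \cite[Theorem~2.14]{Roubicek.2013} applied to $I + h_n A_{\ell}(t_n,\cdot,\omega)$, and measurability via Lemma~\ref{lem:hMeasurable} applied to the root map $\gamma$ from \eqref{eq:DefAux_h}. The only differences are cosmetic: you spell out the coercivity estimate and the term-by-term verification of condition~(2), which the paper asserts more briefly.
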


\begin{proof} 
	For $n = 1$, we begin to verify that for every $\ell \in \{1,\dots, s\}$ the mapping $\omega \mapsto U_{\ell}^1(\omega)$ is measurable. This can be done inductively over $\ell$. First, by Assumption~\ref{ass:initialValue}, the initial value $\omega \mapsto u_0(\omega) = U_0^1(\omega)$ is a $\F_{0}$-measurable function and therefore in particular $\F_{t_1}$-measurable. 
	
	Assuming that for a fixed $\ell \in \{1,\dots, s\}$, the function $\omega \mapsto U_{\ell-1}^1(\omega) \in H$ exists and is $\F_{t_1}$-measurable, the idea in the following is to apply the Browder-Minty theorem to prove the existence of $U_{\ell}^1(\omega)$ for fixed $\omega \in \Omega$. With this at hand, the measurablility of $\omega \mapsto U_{\ell}^1(\omega)$ will be shown using Lemma~\ref{lem:hMeasurable}. 
	
	In order to apply the variant of the Browder--Minty theorem, we notice that for $\omega \in \Omega$, the value $U_{\ell}^n(\omega)$ is given implicitly through the equation
	\begin{align*}
		(I + h_1 A_{\ell}(t_1, \cdot, \omega)) U_{\ell}^1(\omega) = U_{\ell-1}^{1}(\omega) + h_1 F_{\ell}(t_{0}, U_{\ell-1}^{1}(\omega), \omega) + h_1 G_{\ell}(t_1, \omega)\\
		\hspace{5cm}+ B_{\ell}(t_{0}, u_0(\omega), \omega) \Delta W^{1, K_{\ell}}(\omega)  \quad 
		&\text{in } 
		V_{1}^*.
	\end{align*}
	Since $I + h_1 A_{\ell}(t_1, \cdot, \omega)$ is bounded, Lipschitz continuous, strictly monotone and coercive, we can apply \cite[Theorem~2.14]{Roubicek.2013} to ensure the existence of a unique element $U_{\ell}^1(\omega) \in V_{\ell} \subset H$ that fulfills the equation. 
	We now consider the mapping $\gamma \colon V_{\ell} \times \Omega \to V_{\ell}^*$ as defined in \eqref{eq:DefAux_h} for $n = 1$. Due to the Browder--Minty argument above, we have found the unique element $U_{\ell}^1(\omega) \in V_{\ell}$ such that $\gamma(U_{\ell}^1(\omega), \omega) = 0$. 
	Since 
	\begin{align*}
		&\big| \dualVell{\gamma(v_1, \omega)}{w} - \dualVell{\gamma(v_2, \omega)}{w} \big|\\
		&\leq \|\gamma(v_1, \omega) - \gamma(v_2, \omega) \|_{V_{\ell}^*} \|w\|_{V_{\ell}}
		\leq \big(\|v_1 - v_2\|_{V_{\ell}^*} + h_n L_A \|v_1 - v_2 \|_{V_{\ell}}\big) \|w\|_{V_{\ell}}
	\end{align*}
	shows the mapping $v \mapsto \dualVell{\gamma(v, \omega)}{w}$ is Lipschitz continuous for every $w \in V_{\ell}$ and $\omega \in \Omega$, it is in particular continuous. 
	The mapping 
	\begin{align*}
		\omega \mapsto 
		\gamma(v, \omega)
		&= v - U_{\ell-1}^{1}(\omega) + h_1 A(t_1, v, \omega)- h_1 F(t_{0}, U_{\ell-1}^{1}(\omega), \omega)\\
		&\quad - h_1 G(t_{1},\omega) - B(t_{0}, u_0(\omega), \omega) \Delta W^{1, K_{\ell}}(\omega)
	\end{align*}
	is $\F_{t_1}$-measurable as a composition of $\F_{t_1}$-measurable functions. Thus, we can now apply Lemma~\ref{lem:hMeasurable} and find that the mapping $\omega \mapsto U_{\ell}^1(\omega)$ is $\F_{t_1}$-measurable.
	
	Altogether this shows that the function $\omega \mapsto U_0^2(\omega) = U_s^1(\omega) \in H$ exists and is $\F_{t_1}$-measurable. For the following time steps, we can argue by marching through the $n \in \{2,\dots, N\}$ and using the existence and $\F_{t_{n-1}}$-measurability of $\omega \mapsto U_0^n(\omega) = U_s^{n-1}(\omega) \in H$ as well as the same argumentation as above for the $s$ sub-steps.
\end{proof}

\subsection{Application: Domain decomposition}
\label{subsec:domain_decomp}

The abstract framework from the previous section can be applied to allow for a domain decomposition setting similar to \cite{ArrarasEtAl.2017, EisenmannHansen.2018, EisenmannStillfjord.2022, HansenHenningsson.2017} for deterministic nonlinear evolution equations. In the following, we will summarize this setting for the convenience of the reader.

To provide an example for our abstract equation \eqref{eq:StochEvEq}, we consider the setting where the operator $A$ is of Laplace type. Then, let $\D \subset \R^d$, $d \in \N$, be a bounded domain with a Lipschitz boundary $\partial \D$. We consider a stochastic semi-linear  heat equation with homogeneous Dirichlet boundary conditions
\begin{align}\label{eq:SPDE}
	\begin{cases}
		\diff{u} - \alpha(t) \Delta u \diff{t} = (\tilde{F}(t, x, u) + \tilde{G}(t,x)) \diff{t}  + \tilde{B}(t, x, u) \diff{W(t,x)}, \\  
		\hspace{8cm} t \in [0,T], x \in \D,\\
		u(t, x) = 0, \hspace{6.25cm} t \in [0,T], x \in \partial \D,\\
		u(0,x) = u_0(x), \hspace{5.55cm} x \in \D,
	\end{cases}
\end{align}
where $\alpha \colon [0,T] \times \Omega \to \R$  and $u_0 \colon \D \times \Omega \to \R$.
Moreover, the notation $\tilde{F}$, $\tilde{G}$ and $\tilde{B}$ are used to differentiate between the functions $\tilde{F} \colon [0,T] \times \D \times \R \times \Omega \to \R$ and the abstract function $F$ on $[0,T] \times L^2(\D) \times \Omega$ that it gives rise to through $[F(t, u, \omega)](x) = \tilde{F}(t,x,u,\omega)$ and analogously $\tilde{G}$ and $\tilde{B}$. 

In the following, we will use a variational formulation of \eqref{eq:SPDE} in order to introduce the domain decomposition. For this formulation, we use the concept of Sobolev spaces. The Sobolev space of all functions $v \in L^p(\D)$ such that every weak partial derivative $\partial_i v$, $ i \in \{1,\dots,d\}$, exists and is an element of $L^p(\D)$ is denoted by $W^{1,p}(\D)$. 
Additionally, for $p \in [1,\infty]$ we consider the space $W_0^{1,p}(\D)$, 
which is the closure of the infinitely many times differentiable functions with a compact support $C_c^{\infty} (\D)$ with respect to the norm of $W^{1,p}(\D)$. 
For an introduction and more details, we refer the reader to \cite{AdamsFournier.2003}.

In order to split the domain, we decompose $\D$ into $s \in \N$ overlapping sub-domains $\{ \D_{\ell} \}_{\ell =1}^{s}$. We assume that each of these subsets $\D_{\ell}$ has a Lipschitz boundary and that their union is $\D$, i.e.~$\bigcup_{\ell =1}^s \D_{\ell} = \D$.
Further, let the partition of unity $\{\chi_{\ell} \}_{\ell =1}^{s}\subset W^{1,\infty}(\D)$ be given such that for $\ell \in \{1,\dots,s\}$
\begin{align*}
	\chi_{\ell} (x)>0\text{ for all }x\in\D_{\ell},
	\quad
	\chi_{\ell} (x) = 0\text{ for all }x\in\D\setminus\D_{\ell} 
	\quad \text{and} \quad
	\sum_{\ell =1}^{s} \chi_{\ell}= 1.
\end{align*}
Together with the weight functions $\{\chi_{\ell}\}_{\ell \in \{1,\dots,s\}}$, we can now define suitable function spaces and operators. First, let the pivot space $\left(H, \inner[H]{\cdot}{\cdot}, \|\cdot\|_H \right)$ be the space $L^2(\D)$ of square integrable functions on $\D$ with the usual norm and inner product. The space $V$ is given by 
\begin{align*}
	V = \text{clos}_{\|\cdot \|_{V}} \big(C_c^{\infty}(\D)\big) = W_0^{1,2}(\D),
	\quad \text{with} \quad 
	\|\cdot\|_{V}^2
	= \|\cdot\|_H^2 + \|\nabla \cdot\|_{L^2(\D)^d}^2,
\end{align*}
where $L^2(\D)^d$ is the space of all measurable functions $v\colon \D \to \R^d$ such that the components are in $L^2(\D)$.

In order to state the spaces $V_{\ell}$, $\ell \in \{1,\dots,s\}$, we use the weighted Lebesgue space $L^2(\D_{\ell},\chi_{\ell})^d$ that consists of all measurable functions $v = (v_1,\dots,v_d) \colon \D \to \R^d$ such that
\begin{align*}
	\|(v_1,\ldots,v_{d})\|_{L^2(\D_{\ell},\chi_{\ell})^d}
	= \Big(\int_{\D_{\ell}}\chi_{\ell} |(v_1,\ldots,v_{d})|^2 
	\diff{x}\Big)^{\frac{1}{2}}
\end{align*}
is finite. Then $V_{\ell}$, $\ell \in \{1, \dots, s \}$, is given by
\begin{align*}
	V_{\ell} 
	= \text{clos}_{\|\cdot \|_{V_{\ell}}} \big(C_c^{\infty}(\D)\big),
	\quad \text{with} \quad 
	\|\cdot\|_{V_{\ell}}^2
	= \|\cdot\|_H^2 + \| \nabla \cdot \|_{L^2(\D_{\ell},\chi_{\ell})^d}^2.
\end{align*}
The spaces $V_{\ell}$, $\ell \in \{1,\dots,s\}$, are densely embedded into $H$ and their intersection $\bigcap_{\ell = 1}^s V_{\ell}$ is the original space $V$.
The operators $A(t, \cdot, \omega) \colon V \to V^*$, $F(t, \cdot, \omega) \colon H \to H$, $G(t, \omega) \in V^*$, $B(t, \cdot, \omega) \colon H \to H$, and the split operators $A_{\ell}(t, \cdot, \omega) \colon V_{\ell} \to V^*_{\ell}$, $F_{\ell}(t, \cdot, \omega) \colon H \to H$, $G_{\ell}(t, \omega) \in V_{\ell}^*$,  $B_{\ell}(t, \cdot, \omega) \colon H \to H$, $\ell\in \{ 1,\dots,s\}$, $t\in [0,T]$, are given by
\begin{align}
	\label{eq:defA}
	\begin{split}
		\dualV{A(t, v, \omega)}{w} &= \int_{\D} \alpha(t, \omega) \nabla v \cdot \nabla w \diff{x},\\ 
		\dualVell{A_{\ell}(t, v_{\ell}, \omega)}{w_{\ell}} &= \int_{\D_{\ell}} \chi_{\ell} \alpha(t, \omega) \nabla v_{\ell} \cdot  \nabla w_{\ell} \diff{x},
	\end{split}\\
	\label{eq:defG}
	\dualV{G(t, \omega)}{v} = \int_{\D} \tilde{G}(t,\cdot, \omega) &v \diff{x}, \quad \dualVell{G_{\ell}(t, \omega)}{v_{\ell}} = \int_{\D_{\ell}} \chi_{\ell}\tilde{G}(t, \cdot, \omega) v_{\ell} \diff{x}
\end{align}
for $v, w \in V$, $v_{\ell}, w_{\ell} \in V_{\ell}$ and $\omega \in \Omega$ as well as
\begin{align} 
	\label{eq:defF}
	\inner[H]{F(t, v, \omega)}{w} &= \int_{\D} \tilde{F}(t, \cdot, v, \omega) w \diff{x},
	\ \inner[H]{F_{\ell}(t, v, \omega)}{w} = \int_{\D_{\ell}} \chi_{\ell} \tilde{F}(t, \cdot, v, \omega) w \diff{x}\\
	\label{eq:defB}
	\inner[H]{B(t, v, \omega)}{w} &= \int_{\D} \tilde{B}(t, \cdot, v, \omega) v \diff{x}, \ \inner[H]{B_{\ell}(t, v, \omega)}{w} = \int_{\D_{\ell}} \chi_{\ell} \tilde{B}(t, \cdot, v, \omega) w \diff{x}
\end{align}
for $v, w \in H$ and $\omega \in \Omega$. In order to fulfill the rest of the assumptions of the previous section, we further assume that $\alpha \in C^{\nu_A}([0,T];L^{\infty}(\Omega;\R))$, $u_0 \in L^2(\D; L^2(\Omega))$, $\tilde{F}(\cdot,\cdot,v,\cdot) \in C^{\nu_F}([0,T]; L^{\infty}(\D \times \Omega;\R))$, $\tilde{G} \in C^{\nu_G}([0,T];L^2(\Omega; (W_0^{1,2}(\D))^*))$ as well as $\tilde{B}(\cdot,\cdot,v,\cdot) \in C^{\nu_B}([0,T]; L^{\infty}(\D \times \Omega;\R))$ for $v \in \R$ such that $\tilde{F}$ and $\tilde{B}$ are Lipschitz continuous in the third argument.
A proof that the setting stated in this section fulfills the Assumptions~\ref{ass:initialValue}--\ref{ass:B} and \ref{ass:Aell} can be carried through in an analogous manner to \cite[Lemma~3.1]{EisenmannStillfjord.2022} but it remains to show that the operator $B(t,v)$ is an element of $L_2^0$ almost surely. To this extend, we choose the orthonormal basis $\{\psi_k\}_{k \in \N}$ of $H$ from Assumption~\ref{ass:W}. We can then write
\begin{align*}
	\|B(t, v) \|_{L_2^0}^2 
	= \sum_{k=1}^{\infty} \|B(t, v) (q_{k}^{\frac{1}{2}} \psi_k)\|_H^2
	= \sum_{k=1}^{\infty} q_k \int_{\D} |\tilde{B}(t, \cdot, v) \psi_k|^2 \diff{x},
\end{align*}
which is bounded if $\tilde{B}(t, \cdot, v) \in L^{\infty}(\D)$ for every $v \in H$. Alternatively, if the orthonormal basis $\{\psi_k\}_{k \in \N}$ is even bounded in $L^{\infty}(\D)$, it is instead sufficient if $\tilde{B}(t, \cdot, v) \in H$ for $v \in H$ since
\begin{align*}
	\|B(t, v) \|_{L_2^0}^2 
	\leq \sum_{k=1}^{\infty} q_k \|\psi_k\|_{\infty}^2 \int_{\D} |\tilde{B}(t, \cdot, v)|^2 \diff{x}
	= \|\tilde{B}(t, \cdot, v)\|_H^2 \sum_{k=1}^{\infty} q_k \|\psi_k\|_{\infty}^2 
\end{align*}
is then bounded.

In order to obtain the additional regularity from Theorem~\ref{thm:regularity}, we need the setting to fit into Assumption~\ref{ass:regularity}. To this end, we assume additionally that $\alpha \equiv 1$, $H^2 = H^2(\D) \cap H_0^1(\D)$, $u_0 \in H^2(\D; L^2(\Omega)) \cap H_0^1(\D; L^2(\Omega))$ and the operator $B(t, \cdot)$ preserves the regularity of an element $v$ as stated in Assumption~\ref{ass:regularity}~\textit{(iii)}. This last condition is fulfilled for additive noise $B(t, v) = I_H$ (where $I_H$ is the identity operator in $H$) if the basis $\{\psi_k\}_{k \in\N}$ fulfills that they are eigenfunctions of $A$ to the corresponding eigenvalues $\{\lambda_k\}_{k \in\N}$ and $\{q_k \lambda_k^{\nu}\}_{k \in \N}$ is summable. Then  it follows that
\begin{align*}
	\|A^{\frac{\nu}{2}}B(t, v) \|_{L_2^0}^2 
	= \sum_{k=1}^{\infty} \|A^{\frac{\nu}{2}} I (q_{k}^{\frac{1}{2}} \psi_k)\|_H^2
	= \sum_{k=1}^{\infty} q_k \int_{\D} |A^{\frac{\nu}{2}}\psi_k|^2 \diff{x}
	= \sum_{k=1}^{\infty} q_k \lambda_k^{\nu}
\end{align*}
is finite. This condition is more difficult to prove for multiplicative noise.

\section{Error analysis} \label{sec:error_analysis}

In this section, we state a formal analysis for our method \eqref{eq:scheme}. This analysis contains several steps. We begin to show that the solution to our time-discrtization has bounded moments. This we will state in Lemma~\ref{lem:Apriori}. Moreover, we will present a result for the regularity of an auxiliary function that we will need for the following analysis. We conclude this section with our main result Theorem~\ref{thm:errorBound}, where we provide an explicit error bound of the scheme.

\subsection{Boundedness of the numerical solution}
\label{subsec:boundedness}

As stated in \eqref{eq:exactSolVbound} and \eqref{eq:exactSolHbound} the variational solution to the problem \eqref{eq:StochEvEq} fulfills certain moment bounds. More precisely, there exists $C_{\textup{bound, cont.}} \in (0,\infty)$ such that the exact solution $u$ fulfills
\begin{align*}
	\sup_{t \in [0,T]} \E \big[\| u(t) \|_H^2\big]	+ \int_{0}^{T} \E\big[ \|u(t)\|_V^2 \big] \diff{t}
	\leq C_{\textup{bound, cont.}}.
\end{align*}
A first step is to replicate this bound for the numerical solution. In the following lemma, we provide an a priori bound for our numerical approximation. In particular, this lemma ensures that the numerical solution fulfills the analogous moment bounds as the exact solution. 

\begin{lemma}
	\label{lem:Apriori}
	Let Assumptions~\ref{ass:initialValue}--\ref{ass:B} and Assumption~\ref{ass:Aell} be fulfilled and 
	let $U_{\ell}^n$, $\ell \in \{1,\dots,s\}$, $n \in \{1,\dots,N\}$, be the unique solution of the scheme~\eqref{eq:scheme}. Then there exists a constant 
	$C_{\textup{bound}} \in (0,\infty)$ such that for every maximal step size $h \in (0,T)$ the following a priori bound holds
	\begin{align*}
		\max_{n \in \{1,\dots,N\}} \E\big[\|U^n\|_H^2\big] + \sum_{i = 1}^{N} \sum_{\ell=1}^{s} \E\big[\| U_{\ell}^i - 	U_{\ell-1}^i\|_H^2\big]
		+ \sum_{i = 1}^{N} h_i \sum_{\ell=1}^{s} \E\big[\|U_{\ell}^i\|_{V_{\ell}}^2\big]
		\leq C_{\textup{bound}}.
	\end{align*}
\end{lemma}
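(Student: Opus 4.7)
The plan is to test the scheme \eqref{eq:scheme} against $U_\ell^n$ in the $V_\ell^*\times V_\ell$ duality, apply the polarization identity, exploit the coercivity of $A_\ell$, and then carefully absorb the stochastic term before summing over $\ell$ and $n$ and invoking a discrete Gronwall argument.

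First I would apply $\dualVell{\cdot}{U_\ell^n}$ to the $\ell$th line of \eqref{eq:scheme} and use the identity
\begin{equation*}
	\inner[H]{U_\ell^n - U_{\ell-1}^n}{U_\ell^n} = \tfrac12 \|U_\ell^n\|_H^2 - \tfrac12 \|U_{\ell-1}^n\|_H^2 + \tfrac12 \|U_\ell^n - U_{\ell-1}^n\|_H^2.
\end{equation*}
Coercivity from Assumption~\ref{ass:opA}(i) gives $h_n\dualVell{A_\ell(t_n,U_\ell^n)}{U_\ell^n} \geq h_n \mu \|U_\ell^n\|_{V_\ell}^2$. The deterministic right-hand side can be estimated by combining the Lipschitz/boundedness estimates in Assumptions~\ref{ass:opA}(ii), \ref{ass:F} with Young's inequality, e.g.\ $h_n \|F_\ell(t_{n-1},U_{\ell-1}^n)\|_H\|U_\ell^n\|_H \leq C h_n(1 + \|U_{\ell-1}^n\|_H^2 + \|U_\ell^n\|_H^2)$, and similarly for $G_\ell$ after estimating in $V_\ell^*\times V_\ell$ and absorbing a small fraction of $h_n \mu \|U_\ell^n\|_{V_\ell}^2$.

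The delicate step is the noise term $\inner[H]{B_\ell(t_{n-1}, U^{n-1})\Delta W^{n,K_\ell}}{U_\ell^n}$, because $U_\ell^n$ is not $\F_{t_{n-1}}$-measurable, so its expectation does not vanish outright. I would write
\begin{equation*}
	U_\ell^n = U^{n-1} + \sum_{j=1}^{\ell} (U_j^n - U_{j-1}^n),
\end{equation*}
split the inner product accordingly, and note that the first piece $\E[\inner[H]{B_\ell(t_{n-1}, U^{n-1})\Delta W^{n,K_\ell}}{U^{n-1}}]$ is zero by independence of the increment from $\F_{t_{n-1}}$. For the remaining increments I would apply Cauchy--Schwarz and Young's inequality to obtain, with a small parameter $\varepsilon>0$,
\begin{equation*}
	\E\bigl[\inner[H]{B_\ell(t_{n-1},U^{n-1})\Delta W^{n,K_\ell}}{U_j^n - U_{j-1}^n}\bigr] \leq \tfrac{1}{4\varepsilon}\E\bigl[\|B_\ell(t_{n-1},U^{n-1})\Delta W^{n,K_\ell}\|_H^2\bigr] + \varepsilon\, \E\bigl[\|U_j^n - U_{j-1}^n\|_H^2\bigr],
\end{equation*}
and control the first term using It\^o's isometry by $h_n\E[\|B_\ell(t_{n-1},U^{n-1})\|_{L_2^0}^2]$, which is bounded by $Ch_n(1+\E[\|U^{n-1}\|_H^2])$ from Assumption~\ref{ass:B}. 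Choosing $\varepsilon$ small enough to leave a positive multiple of $\tfrac12\|U_\ell^n - U_{\ell-1}^n\|_H^2$ after summation in $\ell$ is what makes the squared-difference term survive on the left-hand side.

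After summing the resulting inequality over $\ell = 1,\dots,s$, the $\|U_\ell^n\|_H^2 - \|U_{\ell-1}^n\|_H^2$ contributions telescope to $\|U^n\|_H^2 - \|U^{n-1}\|_H^2$. Summing over $n = 1,\dots,m$ for arbitrary $m\leq N$ then gives an inequality of the form
\begin{equation*}
	\E\bigl[\|U^m\|_H^2\bigr] + \sum_{i=1}^{m}\sum_{\ell=1}^{s} \E\bigl[\|U_\ell^i - U_{\ell-1}^i\|_H^2\bigr] + \sum_{i=1}^{m} h_i \sum_{\ell=1}^{s}\E\bigl[\|U_\ell^i\|_{V_\ell}^2\bigr] \leq C + C\sum_{i=0}^{m} h_i \E\bigl[\|U^i\|_H^2\bigr],
\end{equation*}
where $C$ depends on $T$, $u_0$, and the constants from the assumptions but not on $h$. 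Taking the maximum over $m$ and applying the discrete Gronwall inequality yields the stated bound. The main obstacle is the bookkeeping of the noise term across the $s$ internal sub-stages: the $\F_{t_{n-1}}$-measurability of $U^{n-1}$ alone is what kills the leading noise contribution, and one has to isolate that piece before Young's inequality is applied, otherwise the squared-increment terms on the left cannot be preserved.
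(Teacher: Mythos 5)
Your proposal is correct and follows essentially the same route as the paper's proof in Appendix~\ref{sec:appendix_proof}: testing with $U_\ell^n$, the polarization identity, coercivity of $A_\ell$, splitting the noise term as $U_\ell^n = U^{n-1} + \sum_j (U_j^n - U_{j-1}^n)$ so that the $U^{n-1}$ part vanishes in expectation by independence, It\^o isometry for the $L_2^0$-bound, and a discrete Gr\"onwall argument. The only cosmetic differences are that the paper applies Young's inequality to $U_\ell^i - U^{i-1}$ as a whole (with $\xi = 2s^2$) before expanding the telescoping sum, and that its Gr\"onwall step uses the two-index variant (Lemma~\ref{lem:discreteGronwall2}) because the Lipschitz estimates leave the sub-stage norms $\E[\|U_{\ell-1}^i\|_H^2]$, not just $\E[\|U^{i-1}\|_H^2]$, on the right-hand side.
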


The proof follows with standard arguments used for a priori bounds for evolution equation as for example in \cite[Lemma~8.6]{Roubicek.2013}. Moreover, the structure is similar to the proof of our main result in Theorem~\ref{thm:errorBound}. For the sake completeness, the proof of the Lemma~\ref{lem:Apriori} can be found in Appendix~\ref{sec:appendix_proof}.

\subsection{Regularity of the domain decomposition}
From our numerical scheme, we obtain an approximation $U^n$ of $u(t_n)$ after every time step. Every time step consists of $s$ sub-steps that take the different operators $\tilde{A}_{\ell}$ and $B_{\ell}$, $\ell \in \{1, \dots,s\}$, into account. This means that for a sub-solution $U_{\ell}^n$ we consider the operators $\tilde{A}_{j}$ and $B_{j}$ for $j$ between $1$ and $\ell$.
In the following we introduce functions $u_{\ell}$ to compare these sub-solutions to. These functions are defined as
\begin{align}\label{eq:auxUl}
	\begin{split}
		u_{\ell}(t) 
		&= u(t_{n-1}) + \sum_{j=1}^{\ell} \Big( \int_{t_{n-1}}^{t} \tilde{A}_{j}(\tau,  u(\tau)) \diff{\tau}
		+ \int_{t_{n-1}}^{t} B_{j}(\tau,  u(\tau) )  \diff{W(\tau)} \Big)\\
		&= u_{\ell-1}(t_n) + \int_{t_{n-1}}^{t} \tilde{A}_{\ell}(\tau,  u(\tau)) \diff{\tau}
		+ \int_{t_{n-1}}^{t} B_{\ell}(\tau,  u(\tau) )  \diff{W(\tau)}
	\end{split}
\end{align}
for $t \in (t_{n-1},t_n]$, $n \in \{1,\dots,N\}$ and with $\tilde{A}_{\ell} = F_{\ell} + G_{\ell} - A_{\ell}$ for $\ell \in \{1,\dots,s\}$.
In order to prove that these auxiliary functions do not differ too much from the exact solution $u$, we additionally need the following assumption.

\begin{assumption}\label{ass:MultiLevel}
	Let Assumptions~\ref{ass:opA}--\ref{ass:B} and \ref{ass:Aell} be fulfilled. There exists $C_{\textup{split}}\in (0,\infty)$ such that for all $t, \tau \in [0,T]$ it holds that
	\begin{align*}
		\begin{split}
			&\Big\| \sum_{j = 1}^{s} \Big(\int_{\tau}^{t} \tilde{A}_{j}(\sigma, v(\sigma)) \diff{\sigma}
			+ \int_{\tau}^{t} B_{j}(\sigma,  v(\sigma) )  \diff{W(\sigma)}\Big) \Big\|_{V}\\
			&\geq C_{\textup{split}} \Big\| \sum_{j = 1}^{\ell} \Big(\int_{\tau}^{t} \tilde{A}_{j}(\sigma, v(\sigma)) \diff{\sigma}
			+ \int_{\tau}^{t} B_{j}(\sigma,  v(\sigma) )  \diff{W(\sigma)}\Big) \Big\|_{V_{\ell}}\\
			&\quad + C_{\textup{split}} \Big\| \sum_{j = \ell+1}^{s} \Big(\int_{\tau}^{t} \tilde{A}_{j}(\sigma, v(\sigma)) \diff{\sigma}
			+ \int_{\tau}^{t} B_{j}(\sigma,  v(\sigma) )  \diff{W(\sigma)}\Big) \Big\|_{V_{\ell}}
		\end{split}
	\end{align*}
	almost surely for every $v \in L^2(0,T;V)$ and $\ell \in \{1,\dots,s\}$.
\end{assumption}

Note that in the case of the orthogonality of the integral operators of Lie splitting operators $\tilde{A}_{\ell}$ and $B_{\ell}$, the previous inequality becomes equality and $C_{\textup{split}}=1$. Similar types of orthogonality conditions are common in multi-grid methods. 

\begin{lemma} \label{lem:reg_uell}
	Let Assumptions~~\ref{ass:opA}--\ref{ass:B}, \ref{ass:Aell} and \ref{ass:MultiLevel} be fulfilled. Then it follows that
	\begin{align*}
		&C_{\textup{split}} \| u(t_n) - u_{\ell}(t_n) \|_{V_{\ell}} 
		+ C_{\textup{split}} \| u_{\ell}(t_n) - u(t_{n-1}) \|_{V_{\ell}} 
		\leq \| u(t_i) - u(t_{i-1}) \|_{V}.
	\end{align*}
\end{lemma}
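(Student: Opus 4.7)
The plan is to recognize that the inequality is essentially a direct consequence of Assumption~\ref{ass:MultiLevel} applied to the exact solution $u$ on the interval $[t_{n-1}, t_n]$, once the two $V_\ell$-norm quantities on the left are identified with the partial sums appearing in the assumption. The argument is almost purely algebraic: no estimates beyond the stated orthogonality-type bound are needed.

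First I would rewrite the two differences on the left-hand side using the definition \eqref{eq:auxUl} of $u_\ell$. Evaluating at $t = t_n$ gives
\begin{align*}
    u_\ell(t_n) - u(t_{n-1}) = \sum_{j=1}^{\ell} \Big( \int_{t_{n-1}}^{t_n} \tilde{A}_j(\tau, u(\tau))\diff{\tau} + \int_{t_{n-1}}^{t_n} B_j(\tau, u(\tau))\diff{W(\tau)} \Big).
\end{align*}
Next, I would use the fact that $u$ is the variational solution of \eqref{eq:StochEvEq} together with the sum property from Assumption~\ref{ass:Aell} to write
\begin{align*}
    u(t_n) - u(t_{n-1}) = \sum_{j=1}^{s} \Big( \int_{t_{n-1}}^{t_n} \tilde{A}_j(\tau, u(\tau))\diff{\tau} + \int_{t_{n-1}}^{t_n} B_j(\tau, u(\tau))\diff{W(\tau)} \Big),
\end{align*}
so that by subtracting the previous identity,
\begin{align*}
    u(t_n) - u_\ell(t_n) = \sum_{j=\ell+1}^{s} \Big( \int_{t_{n-1}}^{t_n} \tilde{A}_j(\tau, u(\tau))\diff{\tau} + \int_{t_{n-1}}^{t_n} B_j(\tau, u(\tau))\diff{W(\tau)} \Big).
\end{align*}

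Finally, I would invoke Assumption~\ref{ass:MultiLevel} with $v = u$, $\tau = t_{n-1}$ and $t = t_n$; the resulting inequality reads exactly
\begin{align*}
    \|u(t_n) - u(t_{n-1})\|_V \geq C_{\textup{split}}\|u_\ell(t_n) - u(t_{n-1})\|_{V_\ell} + C_{\textup{split}}\|u(t_n) - u_\ell(t_n)\|_{V_\ell},
\end{align*}
which is the claim (with the right-hand index $i$ in the lemma being a typo for $n$). The only point that merits a brief check is that the hypothesis $v \in L^2(0,T;V)$ in Assumption~\ref{ass:MultiLevel} is satisfied path-wise by $u$, which follows directly from \eqref{eq:exactSolVbound} in Lemma~\ref{lem:existence}. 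There is no real obstacle here; the content of the lemma lies entirely in Assumption~\ref{ass:MultiLevel}, and the role of this proof is just to make that identification explicit so that the assumption can be reused in the error analysis of Theorem~\ref{thm:errorBound}.
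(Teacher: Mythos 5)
Your argument is correct and coincides with the paper's own proof: both identify $u_\ell(t_n) - u(t_{n-1})$ and $u(t_n) - u_\ell(t_n)$ with the partial sums over $j \le \ell$ and $j > \ell$, respectively, and then apply Assumption~\ref{ass:MultiLevel} with $v = u$ on $[t_{n-1}, t_n]$. Your additional observations (that the index $i$ on the right-hand side should read $n$, and that $u \in L^2(0,T;V)$ justifies invoking the assumption) are accurate.
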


\begin{proof}
	The difference of the function $u$ at two grid points $t_n$ and $t_{n-1}$ can be split up into two parts
	\begin{align*}
		u(t_n) - u(t_{n-1})
		&= \Big(\sum_{j = 1}^{\ell} \int_{t_{n-1}}^{t_n} \tilde{A}_{j}(\tau, u(\tau)) \diff{\tau}
		+ \sum_{j = 1}^{s} \int_{t_{n-1}}^{t_n} B_{j}(\tau,  u(\tau) )  \diff{W(\tau)}\Big)\\
		&\quad + \Big(\sum_{j = \ell + 1}^{s} \int_{t_{n-1}}^{t_n} \tilde{A}_{j}(\tau, u(\tau)) \diff{\tau}
		+ \sum_{j = 1}^{s} \int_{t_{n-1}}^{t_n} B_{j}(\tau,  u(\tau) )  \diff{W(\tau)}\Big).
	\end{align*}
	Using this decomposition and the definition of the function $u_{\ell}(t_n)$, we find that
	\begin{align*}
		u_{\ell}(t_n)
		&=u(t_n) - \sum_{j = \ell + 1}^{s} \Big(\int_{t_{n-1}}^{t_n} \tilde{A}_{j}(\tau, u(\tau)) \diff{\tau}
		+ \int_{t_{n-1}}^{t_n} B_{j}(\tau,  u(\tau) )  \diff{W(\tau)}\Big)\\
		&= u(t_{n-1}) + \sum_{j = 1}^{\ell}\Big( \int_{t_{n-1}}^{t_n} \tilde{A}_{j}(\tau, u(\tau)) \diff{\tau}
		+ \int_{t_{n-1}}^{t} B_{j}(\tau,  u(\tau) )  \diff{W(\tau)}\Big).
	\end{align*}
	This then enables us to find a representation of $u(t_n) - u_{\ell}(t_n)$ and $u_{\ell}(t_n) - u(t_{n-1})$ given by 
	\begin{align*}
		u(t_n) - u_{\ell}(t_n)
		&= \sum_{j = \ell+1}^{s} \Big(\int_{t_{n-1}}^{t_n} \tilde{A}_{j}(\tau, u(\tau)) \diff{\tau}
		+ \int_{t_{n-1}}^{t_n} B_{j}(\tau,  u(\tau) )  \diff{W(\tau)}\Big)
	\end{align*}
	and
	\begin{align*}
		u_{\ell}(t_n) - u(t_{n-1})
		&= \sum_{j = 1}^{\ell} \Big(\int_{t_{n-1}}^{t_n} \tilde{A}_{j}(\tau, u(\tau)) \diff{\tau}
		+ \int_{t_{n-1}}^{t_n} B_{j}(\tau,  u(\tau) )  \diff{W(\tau)}\Big).
	\end{align*}
	Applying Assumption~\ref{ass:MultiLevel}, it then follows that
	\begin{align*}
		&C_{\textup{split}} \| u(t_n) - u_{\ell}(t_n) \|_{V_{\ell}} + C_{\textup{split}} \| u_{\ell}(t_n) - u(t_{n-1}) \|_{V_{\ell}}\\
		&= C_{\textup{split}} \Big\| \sum_{j = 1}^{\ell} \Big(\int_{t_{n-1}}^{t_n} \tilde{A}_{j}(\tau, u(\tau)) \diff{\tau}
		+ \int_{t_{n-1}}^{t_n} B_{j}(\tau,  u(\tau) )  \diff{W(\tau)}\Big) \Big\|_{V_{\ell}}\\
		&\quad + C_{\textup{split}} \Big\| \sum_{j = \ell+1}^{s} \Big(\int_{t_{n-1}}^{t_n} \tilde{A}_{j}(\tau, u(\tau)) \diff{\tau}
		+ \int_{t_{n-1}}^{t_n} B_{j}(\tau,  u(\tau) )  \diff{W(\tau)}\Big) \Big\|_{V_{\ell}}\\
		&\leq \Big\| \sum_{j = 1}^{s} \Big(\int_{t_{n-1}}^{t_n} \tilde{A}_{j}(\tau, u(\tau)) \diff{\tau}
		+ \int_{t_{n-1}}^{t_n} B_{j}(\tau,  u(\tau) )  \diff{W(\tau)}\Big) \Big\|_{V}\\
		&= \| u(t_n) - u(t_{n-1}) \|_{V}.
	\end{align*}
\end{proof}

\subsection{Error bound}
\label{subsec:error_bound}

In this section, we provide a result on the mean-square convergence of the scheme \eqref{eq:scheme} which is a combination of an IMEX-Euler--Maruyama method and a Lie splitting scheme for the stochastic evolution equation \eqref{eq:StochEvEq}.

In the following section, we define the full error $e^n = u(t_n) - U^n$ as well as as the sub-step error $e_{\ell}^n = u_{\ell}(t_n) - U_{\ell}^n$ for $n \in \{1,\dots,N\}$ and $\ell \in \{1,\dots,s\}$. We then prove that $\E[\|e^n\|_H^2]$ tends to zero as the maximal time step size $h \to 0$. In order to prove this error bound, we summarize the necessary assumption on the setting: 
\begin{itemize}
	\item[(H1)] Let Assumptions~\ref{ass:initialValue}--\ref{ass:B} be fulfilled which ensures that the stochastic evolution equation \eqref{eq:StochEvEq} has a unique variational solution $u$.
	\item[(H2)] For $\nu_u \in (0,\frac{1}{2})$, let the exact solution $u$ of the stochastic evolution equation \eqref{eq:StochEvEq} be an element of $C^{\nu_u}([0,T]; L^2(\Omega; V))$. The H\"older exponent $\nu_u$ determines the order of the error.
	\item[(H3)] Let Assumption~\ref{ass:Aell} be fulfilled to state the decomposed problem. Further, let Assumption~\ref{ass:MultiLevel} be fulfilled to ensure that the auxiliary function $u_{\ell}$ defined in \eqref{eq:auxUl} stays sufficiently close to the solution $u$ of \eqref{eq:StochEvEq}.
	\item[(H4)] Assume that for every $\ell \in \{1,\dots,s\}$ there exists $\varepsilon_{\ell} \in (0,\infty)$ such that $\sum_{k > K_{\ell}} \E\big[ \|B_{\ell}(t,v)(q_{k}^{\frac{1}{2}} \psi_k)\|_H^2 \big] \leq \varepsilon_{\ell} \big( 1 + \E \big[ \|v\|_H^2 \big]\big)$ for every $v \in L^2(\Omega;H)$. This ensures that the truncation error $\varepsilon = \sum_{\ell = 1}^s \varepsilon_{\ell}$ is sufficiently small.
\end{itemize}

\begin{remark} \label{rem:truncatedNoise}
	The assumption $\sum_{k > K_{\ell}} \E\big[ \| B_{\ell}(t,v)(q_{k}^{\frac{1}{2}} \psi_k)\|_H^2 \big] \leq \varepsilon_{\ell} \big(1 + \E \big[ \|v\|_H^2 \big]\big)$ on the truncation is fulfilled, if the eigenvalues $\{q_k\}_{k \in \N}$ of $Q$ are summable and 
	\begin{align*}
		\E\big[ \big\| B_{\ell}(t_{i-1}, v) \big(\psi_{k}\big) \big\|_H^2 \big]
		\leq C_{\textup{truncation}} \big(1 + \E\big[ \| v \|_H^2 \big]\big)
	\end{align*}
	is fulfilled. The value $\varepsilon_{\ell}$ is then given by $\varepsilon_{\ell} = C_{\textup{truncation}} \sum_{k > K_{\ell}} q_{k}$.
	An example for this setting is if $B_{\ell}(t,v) \in \L(H)$, $t \in [0,T]$. But also weaker assumptions can be made if the eigenfunctions $\{\psi_k\}_{k \in \N}$ are more regular. 
\end{remark}

\begin{theorem}[Bound for the error] \label{thm:errorBound}
	Let $U^n$ and $U_{\ell}^n$ for $n \in \{1,\dots,N\}$ and $\ell \in \{1,\dots,s\}$ be given by the split IMEX-Euler--Maruyama method \eqref{eq:scheme}.
	Under the assumptions stated in (H1)--(H4), there exists a constant $C_{\textup{bound}} \in (0,\infty)$ such that the errors $e^n = u(t_n) - U^n$ and $e_{\ell}^n = u_{\ell}(t_n) - U_{\ell}^n$ of the scheme \eqref{eq:scheme} can be bounded by
	\begin{align*}
		&\max_{n \in \{1,\dots,N\}} \E\big[\| e^n\|_H^2\big] + \sum_{i=1}^{N} \sum_{\ell=1}^{s} \E\big[\| e_{\ell}^i - e_{\ell-1}^i\|_H^2\big] 
		+ \sum_{i=1}^{N} h_i \sum_{\ell=1}^{s} \E\big[\|e_{\ell}^i\|_{V_{\ell}}^2\big] \\
		&\leq C_{\textup{bound}} \big(h^{2\nu} + \varepsilon \big),
	\end{align*}
	where $\nu = \min \{ \nu_A, \nu_F, \nu_B, \nu_G, \nu_u \}$.
\end{theorem}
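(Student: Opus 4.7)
The plan is to derive a one-step error recursion by comparing the increments $u_{\ell}(t_n) - u_{\ell-1}(t_n)$ (with the convention $u_0(t_n) = u(t_{n-1})$) to $U_{\ell}^n - U_{\ell-1}^n$, and then to run a standard energy argument in the $H$-norm. From the definition \eqref{eq:auxUl} and the scheme \eqref{eq:scheme}, subtracting yields
\begin{align*}
e_{\ell}^n - e_{\ell-1}^n + h_n\bigl(A_{\ell}(t_n, u_{\ell}(t_n)) - A_{\ell}(t_n, U_{\ell}^n)\bigr) = R_A^{n,\ell} + R_F^{n,\ell} + R_G^{n,\ell} + R_B^{n,\ell},
\end{align*}
where each $R_{\cdot}^{n,\ell}$ is an explicit consistency residual, e.g.\ $R_A^{n,\ell} = \int_{t_{n-1}}^{t_n}\bigl(A_{\ell}(t_n, u_{\ell}(t_n)) - A_{\ell}(\tau, u(\tau))\bigr)\diff{\tau}$ and $R_B^{n,\ell} = \int_{t_{n-1}}^{t_n} B_{\ell}(\tau, u(\tau))\diff{W(\tau)} - B_{\ell}(t_{n-1}, U^{n-1})\Delta W^{n,K_{\ell}}$. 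I would then test this identity against $e_{\ell}^n \in V_{\ell}$ in the $V_{\ell}^*\times V_{\ell}$ duality and use the polarisation identity $\inner[H]{a-b}{a} = \tfrac12(\|a\|_H^2 - \|b\|_H^2 + \|a-b\|_H^2)$ together with the monotonicity of $A_{\ell}$ from Assumption~\ref{ass:opA}(i) to obtain
\begin{align*}
\tfrac12\bigl(\|e_{\ell}^n\|_H^2 - \|e_{\ell-1}^n\|_H^2 + \|e_{\ell}^n - e_{\ell-1}^n\|_H^2\bigr) + h_n\eta\|e_{\ell}^n\|_{V_{\ell}}^2 \le \sum_{\cdot}\dualVell{R_{\cdot}^{n,\ell}}{e_{\ell}^n}.
\end{align*}

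The next step is to estimate the four residual pairings. For the deterministic residuals $R_A^{n,\ell}, R_F^{n,\ell}, R_G^{n,\ell}$, I would split the differences through an intermediate evaluation (e.g.\ $A_{\ell}(t_n,u_{\ell}(t_n)) - A_{\ell}(\tau,u(\tau))$ is split into a time-Hölder piece and a spatial Lipschitz piece via Assumption~\ref{ass:opA}(ii), Assumption~\ref{ass:F}), using the Hölder hypothesis (H2) in $L^2(\Omega;V)$ and the $V_\ell$-control of $\|u(t_n) - u_{\ell-1}(t_n)\|_{V_\ell}$ and $\|u_{\ell}(t_n) - u(t_{n-1})\|_{V_\ell}$ provided by Lemma~\ref{lem:reg_uell} and Assumption~\ref{ass:MultiLevel}; the $F_{\ell}$ residual additionally yields a term in $\|e_{\ell-1}^n\|_H$ (since it is evaluated at $U_{\ell-1}^n$) that will be absorbed by the Gronwall step. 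Every term $\|e_{\ell}^n\|_{V_\ell}$ produced by these bounds is absorbed into the coercive $h_n\eta\|e_{\ell}^n\|_{V_\ell}^2$ on the left via Young's inequality, so that only $h_n(h^{2\nu}$-terms$)$ and $h_n\|e_{\ell-1}^n\|_H^2$ remain.

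The stochastic residual $R_B^{n,\ell}$ is the delicate term and constitutes the main obstacle. I would decompose it into a truncation part $\int_{t_{n-1}}^{t_n}(B_\ell - B_\ell^{K_\ell})(\tau,u(\tau))\diff{W(\tau)}$, a time-regularity part $\int_{t_{n-1}}^{t_n}(B_\ell(\tau,u(\tau)) - B_\ell(t_{n-1},u(t_{n-1})))\diff{W^{K_\ell}(\tau)}$, and a spatial part $(B_\ell(t_{n-1},u(t_{n-1})) - B_\ell(t_{n-1},U^{n-1}))\Delta W^{n,K_\ell}$. Since $e_{\ell}^n$ is not $\F_{t_{n-1}}$-measurable, the pairing $\dualVell{R_B^{n,\ell}}{e_{\ell}^n}$ cannot be dropped by taking expectations; the standard trick is to rewrite it as $\dualVell{R_B^{n,\ell}}{e_{\ell-1}^n} + \dualVell{R_B^{n,\ell}}{e_{\ell}^n - e_{\ell-1}^n}$, where the first term has zero expectation by the $\F_{t_{n-1}}$-measurability of $e_{\ell-1}^n$ (using the tower property and independence of $\Delta W^{n,K_\ell}$) and the second is absorbed into $\tfrac12\|e_{\ell}^n - e_{\ell-1}^n\|_H^2$ on the left via Cauchy--Schwarz and Young after taking expectations. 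The Itô isometry combined with (H4) then produces the truncation contribution $h_n\varepsilon$, while the Hölder-in-time and Lipschitz-in-space parts yield $h_n h^{2\nu_B}$ and $h_n \E[\|u(t_{n-1}) - U^{n-1}\|_H^2]$ respectively.

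Finally, I would sum the resulting inequality over $\ell\in\{1,\dots,s\}$. Because $e_0^n = e^{n-1}$ and $e_s^n = e^n$ the $H$-norm terms telescope, and summing over $n$ leaves a discrete Gronwall-type inequality
\begin{align*}
\E[\|e^n\|_H^2] + \sum_{i\le n}\sum_{\ell}\E[\|e_{\ell}^i - e_{\ell-1}^i\|_H^2] + \sum_{i\le n}h_i\sum_{\ell}\E[\|e_{\ell}^i\|_{V_\ell}^2] \le C(h^{2\nu} + \varepsilon) + C\sum_{i<n} h_i \E[\|e^{i}\|_H^2],
\end{align*}
to which the discrete Gronwall lemma yields the claimed bound with $\nu = \min\{\nu_A,\nu_F,\nu_B,\nu_G,\nu_u\}$. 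The a priori bound of Lemma~\ref{lem:Apriori} is used implicitly to control constants depending on moments of $U^{n-1}$ appearing in the truncation bound from (H4).
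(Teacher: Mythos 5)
Your overall strategy is the same as the paper's: derive the sub-step error identity, test with $e_{\ell}^n$, use the polarisation identity and the monotonicity of $A_{\ell}$, split the residuals into H\"older-in-time and Lipschitz-in-space pieces controlled by (H2) and Lemma~\ref{lem:reg_uell}, treat the noise via It\^o's isometry and (H4), telescope over $\ell$, and close with a discrete Gronwall argument. However, there is one genuine error in your treatment of the stochastic residual. You split $\inner[H]{R_B^{n,\ell}}{e_{\ell}^n}$ into $\inner[H]{R_B^{n,\ell}}{e_{\ell-1}^n}+\inner[H]{R_B^{n,\ell}}{e_{\ell}^n-e_{\ell-1}^n}$ and claim the first term vanishes in expectation because $e_{\ell-1}^n$ is $\F_{t_{n-1}}$-measurable. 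That is false for $\ell\ge 2$: both $u_{\ell-1}(t_n)$ (which contains $\int_{t_{n-1}}^{t_n}B_j(\tau,u(\tau))\diff{W(\tau)}$) and $U_{\ell-1}^n$ (which contains $B_{\ell-1}(t_{n-1},U^{n-1})\Delta W^{n,K_{\ell-1}}$) depend on the Wiener increments over $(t_{n-1},t_n]$, so $e_{\ell-1}^n$ is only $\F_{t_n}$-measurable and the martingale cross term does not drop. The paper instead adds and subtracts $e^{n-1}=e_0^n$, which genuinely is $\F_{t_{n-1}}$-measurable; the price is that the remaining term involves $\|e_{\ell}^n-e^{n-1}\|_H^2$, which is not directly one of the left-hand increments and must be expanded as a telescoping sum $\sum_{j=1}^{\ell}(e_j^n-e_{j-1}^n)$ and bounded by $s\sum_{j=1}^{s}\|e_j^n-e_{j-1}^n\|_H^2$ — hence the small weight (of order $s^{-2}$) in the Young step so that the total contribution is still absorbed by $\tfrac12\sum_{\ell}\|e_{\ell}^n-e_{\ell-1}^n\|_H^2$. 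Your version skips this and the absorption constant would not work out as written.

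A second, more minor imprecision: the $F_{\ell}$- and $B_{\ell}$-residuals produce terms of the form $h_n\sum_{\ell}\E[\|e_{\ell-1}^n\|_H^2]$ involving the \emph{intermediate} sub-step errors, not only the full-step error $\E[\|e^{n-1}\|_H^2]$, and these intermediate errors are not a priori dominated by $e^n$ or $e^{n-1}$. Your concluding Gronwall inequality, which carries only $\sum_{i<n}h_i\E[\|e^i\|_H^2]$, therefore does not follow directly; one needs the doubly indexed Gronwall variant of Lemma~\ref{lem:discreteGronwall2} applied to the quantities $u_{n,\ell}$ that accumulate the sub-step errors, exactly as the paper does. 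Both issues are repairable without changing your architecture, but as stated the martingale-term argument fails.
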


\begin{proof} 
	The proof is based on techniques from PDE analysis in a variational framework. In the following, we will state an equation for the error, which we will test with the error itself. The remaining terms can either be compensated by, for example, the monotonicty of $A_{\ell}(t_i, \cdot)$ or are small due to the regularity assumptions.
	
	We begin to subtract the numerical approximation $U_{\ell}^i$ from $u_{\ell}(t_i)$. Applying the abbreviation $e_{\ell}^i = u_{\ell}(t_i) - U_{\ell}^i$, we then obtain%
	{\allowdisplaybreaks
		\begin{align*}  
			e_{\ell}^i
			&= e_{\ell-1}^i 
			+ \int_{t_{i-1}}^{t_i} \big( F_{\ell}(t,  u(t)) - F_{\ell}(t_{i-1},  U_{\ell-1}^i)\big) \diff{t}
			+ \int_{t_{i-1}}^{t_i} \big( G_{\ell}(t) - G_{\ell}(t_{i-1}) \big) \diff{t}\\
			&\quad -\int_{t_{i-1}}^{t_i} \big( A_{\ell}(t, u(t)) - A_{\ell}(t_i, U_{\ell}^i)\big) \diff{t}\\
			&\quad + \int_{t_{i-1}}^{t_i} \big(B_{\ell}(t,  u(t)) - B_{\ell}(t_{i-1},U^{i-1}) \big)  \diff{W(t)}\\
			&\quad + B_{\ell}(t_{i-1},U^{i-1}) \big(\Delta W^{i} - \Delta W^{i, K_{\ell}} \big).
	\end{align*}}
	Next, we test the equation with $e_{\ell}^i$ and use the identity $\inner[H]{a - b}{a} = \frac{1}{2} \big( \|a\|_H^2 - \|b\|_H^2 + \|a - b\|_H^2\big)$ for $a, b \in H$. After summing up the equation from $\ell=1$ to $s$ and using the telescopic sum argument, we obtain 
	{\allowdisplaybreaks\begin{align*}
			&\frac{1}{2} \big(\|e^i\|_H^2 - \|e^{i-1}\|_H^2 + \sum_{\ell=1}^{s} \|e_{\ell}^i - e_{\ell-1}^i\|_H^2 \big)\\
			&= \int_{t_{i-1}}^{t_i} \sum_{\ell=1}^{s}\innerb[H]{F_{\ell}(t,  u(t)) - F_{\ell}(t_{i-1},  U_{\ell-1}^i)}{e_{\ell}^i} \diff{t}\\
			&\quad + \int_{t_{i-1}}^{t_i} \sum_{\ell=1}^{s}\dualVell{G_{\ell}(t) - G_{\ell}(t_{i-1}) }{e_{\ell}^i} \diff{t}\\
			&\quad - \int_{t_{i-1}}^{t_i} \sum_{\ell=1}^{s} \dualVell{A_{\ell}(t, u(t)) - A_{\ell}(t_i, U_{\ell}^i)}{e_{\ell}^i} \diff{t}\\
			&\quad + \sum_{\ell=1}^{s} \innerB[H]{\int_{t_{i-1}}^{t_i} B_{\ell}(t,  u(t)) - B_{\ell}(t_{i-1},U^{i-1}) \diff{W(t)} }{e_{\ell}^i}\\
			&\quad + \sum_{\ell=1}^{s} \innerb[H]{B_{\ell}(t_{i-1},U^{i-1}) \big(\Delta W^{i} - \Delta W^{i, K_{\ell}} \big)}{e_{\ell}^i}
			= \Gamma_1 + \Gamma_2 + \Gamma_3 + \Gamma_4 + \Gamma_5.
	\end{align*} }
	The main idea will be to split the terms such that they can be compensated by $\sum_{\ell=1}^{s} \|e_{\ell}^i - e_{\ell-1}^i\|_H^2$, the monotoicity condition on $A_{\ell}(t_i,\cdot)$ or Gr\"onwall's inequality. The remaining terms will be bounded using the regularity assumptions stated on the solution and Assumption~\ref{ass:MultiLevel}.
	First note that by assumption the exact solution $u$ is in $C^{\nu}([0,T]; L^2(\Omega; V))$. This implies, in particular, that it lies in $C^{\nu}([0,T]; L^2(\Omega; H))$ and $C^{\nu}([0,T]; L^2(\Omega; V_{\ell}))$ for every $\ell \in \{1,\dots,s\}$. Moreover, by Lemma~\ref{lem:reg_uell} and the fact that $V_{\ell}$ is continuously embedded into $H$, the distance of $u(t)$ and $u_{\ell}(t)$ is also bounded in the $V_{\ell}$ and $H$-norm. For simplicity of notation, we will use in the following that there exits a constant $C_{\textup{reg}} \in (0,\infty)$ such that
	\begin{align} \label{eq:convergence_proof_reg}
		\begin{split}
			\E\big[ \|u(t) - u(\tau)\|_H^2 \big] \leq C_{\textup{reg}} |t - \tau|^{2 \nu}, \quad
			\E\big[ \|u(t) - u(\tau)\|_{V_{\ell}}^2 \big] \leq C_{\textup{reg}} |t - \tau|^{2 \nu}, \\
			\E\big[ \|u(t) - u_{\ell}(\tau)\|_H^2 \big] \leq C_{\textup{reg}} |t - \tau|^{2 \nu}, \quad
			\E\big[ \|u(t) - u_{\ell}(\tau)\|_{V_{\ell}}^2 \big] \leq C_{\textup{reg}} |t - \tau|^{2 \nu}
		\end{split}
	\end{align}
	for all $t, \tau \in [0,T]$.
	We now bound each term $\Gamma_j$, $j \in \{1,\dots,5\}$, separately. 
	For $\Gamma_1$, we begin by adding and subtracting $e_{\ell-1}^{i}$ to the right side of the inner product. We the find that
	\begin{align*}
		\Gamma_1
		&= \int_{t_{i-1}}^{t_i} \sum_{\ell=1}^{s}\innerb[H]{F_{\ell}(t,  u(t)) - F_{\ell}(t_{i-1},  U_{\ell-1}^i)}{e_{\ell}^i - e_{\ell-1}^{i}} \diff{t}\\
		&\quad + \int_{t_{i-1}}^{t_i} \sum_{\ell=1}^{s}\innerb[H]{F_{\ell}(t,  u(t)) - F_{\ell}(t_{i-1},  U_{\ell-1}^i)}{e_{\ell-1}^{i}} \diff{t}
		= \Gamma_{1,1} + \Gamma_{1,2}.
	\end{align*}
	We will now bound $\Gamma_{1,1}$ by applying Cauchy--Schwarz inequality and Lemma~\ref{lem:CauchyInEq} with $\xi = h_i 9$ on the following three summands and inserting Assumption~\ref{ass:F}. We then obtain%
	{\allowdisplaybreaks
		\begin{align*}
			\Gamma_{1,1}
			&= \int_{t_{i-1}}^{t_i} \sum_{\ell=1}^{s}\innerb[H]{F_{\ell}(t,  u(t)) - F_{\ell}(t_{i-1},  u(t_{i-1})) }{e_{\ell}^i - e_{\ell-1}^{i}} 	\diff{t}\\
			&\quad + \int_{t_{i-1}}^{t_i} \sum_{\ell=1}^{s}\innerb[H]{F_{\ell}(t_{i-1},  u(t_{i-1})) - F_{\ell}(t_{i-1},  u_{\ell-1}(t_i) ) }{e_{\ell}^i 	- e_{\ell-1}^{i}} \diff{t}\\
			&\quad + \int_{t_{i-1}}^{t_i} \sum_{\ell=1}^{s}\innerb[H]{F_{\ell}(t,  u_{\ell-1}(t_i)) - F_{\ell}(t_{i-1},  U_{\ell-1}^i)}{e_{\ell}^i - e_{\ell-1}^{i}} \diff{t}\\
			&\leq h_i 9 L_F^2 \Big(s \int_{t_{i-1}}^{t_i} \big(|t - t_{i-1}|^{2\nu} + \| u(t) - u(t_{i-1}) \|_H^2\big) \diff{t}\\
			&\quad + h_i \sum_{\ell=1}^{s} \| u(t_{i-1}) - u_{\ell-1}(t_i) \|_H^2
			+ h_i \sum_{\ell=1}^{s} \| e_{\ell-1}^i\|_H^2\Big) + \frac{1}{12} \sum_{\ell=1}^{s} \| e_{\ell}^i - e_{\ell-1}^i\|_H^2.
	\end{align*}}
	Similarly for the next term $\Gamma_{1,2}$, we apply Lemma~\ref{lem:CauchyInEq} with $\xi = \frac{3}{4}$ and obtain
	\begin{align*}
		\Gamma_{1,2}
		&\leq \frac{3}{4} L_F^2 \Big(s \int_{t_{i-1}}^{t_i} \big(|t - t_{i-1}|^{2\nu} + \| u(t) - u(t_{i-1}) \|_H^2\big) \diff{t}\\
		&\quad + h_i \sum_{\ell=1}^{s} \|u(t_{i-1}) - u_{\ell-1}(t_i) \|_H^2\Big)
		+ h_i \Big(\frac{3}{4} L_F^2 + 1\Big) \sum_{\ell=1}^{s} \| e_{\ell-1}^i\|_H^2.
	\end{align*}
	Due to the regularity condition from \eqref{eq:convergence_proof_reg}, it then follows that
	\begin{align*}
		\E \big[\Gamma_1 \big]
		&\leq h_i^{1+ 2\nu} \Big( h \frac{9 (1 + C_{\textup{reg}})L_F^2 s }{1 + 2 \nu} + h 9 C_{\textup{reg}} L_F^2 s + \frac{3 (1 + C_{\textup{reg}})L_F^2 s }{4 + 8 \nu}
		+ \frac{3}{4} C_{\textup{reg}} L_F^2 s \Big)\\
		&\quad + \frac{1}{12} \sum_{\ell=1}^{s} \E \big[ \| e_{\ell}^i - e_{\ell-1}^i\|_H^2\big] 
		+ h_i \Big(h_i 9 L_F^2 + \frac{3}{4} L_F^2 + 1\Big) \sum_{\ell=1}^{s} \E \big[ \| e_{\ell-1}^i\|_H^2\big]\\
		&=: h_i^{1+ 2\nu} C_{\Gamma_1,1}
		+ \frac{1}{12} \sum_{\ell=1}^{s} \E \big[ \| e_{\ell}^i - e_{\ell-1}^i\|_H^2\big] 
		+ h_i C_{\Gamma_1,2} \sum_{\ell=1}^{s} \E \big[ \| e_{\ell-1}^i\|_H^2\big].
	\end{align*}
	Using the Cauchy--Schwarz inequality for the dual pairing and Lemma~\ref{lem:CauchyInEq} with $\xi = \frac{3}{2 \eta}$, where $\eta$ comes from the monotonicity condition on $A_{\ell}(t_i,\cdot)$, we find for $\E\big[\Gamma_2 \big]$ that
	\begin{align*}
		\E\big[ \Gamma_2 \big]
		&\leq \int_{t_{i-1}}^{t_i} \sum_{\ell=1}^{s} \E\big[ \| G_{\ell}(t) - G_{\ell}(t_i) \|_{V_{\ell}^*} \| e_{\ell}^i\|_{V_{\ell}}\big] \diff{t}\\
		&\leq \frac{3}{2 \eta} \int_{t_{i-1}}^{t_i} \sum_{\ell=1}^{s} \E\big[ \| G_{\ell}(t) - G_{\ell}(t_i) \|_{V_{\ell}^*}^2\big] \diff{t}
		+ h_i \frac{\eta}{6} \sum_{\ell=1}^{s} \E\big[\| e_{\ell}^i\|_{V_{\ell}}^2\big]\\
		&\leq h_i^{1 + 2 \nu} C_{\Gamma_2}
		+ h_i \frac{\eta}{6} \sum_{\ell=1}^{s} \E\big[\| e_{\ell}^i\|_{V_{\ell}}^2\big],
	\end{align*}
	since $G_{\ell} \in C^{\nu}([0,T]; L^2(\Omega; V_{\ell}^*))$.
	For the third term $\Gamma_3$, we extend it by adding and subtracting both $A_{\ell}(t_i, u(t_i))$ and $A_{\ell}(t_i, u_{\ell}(t_i))$ to the left side of the dual pairing. The resulting terms can be bounded using Cauchy--Schwarz inequality, the monotonicity condition and the Lipschitz condition from Assumption~\ref{ass:opA}, as well as Lemma~\ref{lem:CauchyInEq} with $\xi = \frac{3}{2\eta}$. It then follows that%
	{\allowdisplaybreaks
		\begin{align*}
			\E\big[\Gamma_3 \big]
			&\leq \int_{t_{i-1}}^{t_i} \sum_{\ell=1}^{s} \E\big[ \|A_{\ell}(t, u(t)) - A_{\ell}(t_i, u(t_i))\|_{V_{\ell}^*} \|e_{\ell}^i\|_{V_{\ell}}\big] \diff{t}\\
			&\quad + h_i \sum_{\ell=1}^{s} \E\big[\| A_{\ell}(t_i, u(t_i)) - A_{\ell}(t_i, u_{\ell}(t_i))\|_{V_{\ell}^*} \|e_{\ell}^i\|_{V_{\ell}} \big]
			- h_i \eta \sum_{\ell=1}^{s} \E\big[\|e_{\ell}^i\|_{V_{\ell}}^2\big] \\
			&\leq \frac{3 L_A^2 }{2 \eta} \Big( \int_{t_{i-1}}^{t_i} \sum_{\ell=1}^{s} \big(|t - t_i|^{2\nu} + \E\big[\|u(t) - u(t_i) \|_{V_{\ell}}^2\big] \big) \diff{t}\\
			&\qquad + h_i \sum_{\ell=1}^{s} \E\big[\| u(t_i) - u_{\ell}(t_i) \|_{V_{\ell}}^2\big] \Big)
			- h_i \frac{2 \eta}{3} \sum_{\ell=1}^{s} \E\big[\|e_{\ell}^i\|_{V_{\ell}}^2\big]\\
			&\leq h_i^{1 + 2\nu} \frac{3 L_A^2 s}{2 \eta} \Big(\frac{1 + C_{\textup{reg}}}{1 + 2 \nu}
			+ C_{\textup{reg}} \Big)
			- h_i \frac{2 \eta}{3} \sum_{\ell=1}^{s} \E\big[\|e_{\ell}^i\|_{V_{\ell}}^2\big]\\
			&=: h_i^{1 + 2\nu} C_{\Gamma_3}
			- h_i \frac{2 \eta}{3} \sum_{\ell=1}^{s} \E\big[\|e_{\ell}^i\|_{V_{\ell}}^2\big],
	\end{align*}}
	where we again used the regularity condition from \eqref{eq:convergence_proof_reg}.
	For the  term $\Gamma_4$, we add and subtract $e^{i-1}$ to the right side of the inner product. Using Lemma~\ref{lem:CauchyInEq} with $\xi = 3s^2$, we find that
	\begin{align*}
		\Gamma_4
		&= \sum_{\ell=1}^{s} \innerB[H]{\int_{t_{i-1}}^{t_i} (B_{\ell}(t,  u(t)) - B_{\ell}(t_{i-1},U^{i-1}) )\diff{W(t)}}{e_{\ell}^i - e^{i-1}} \\
		&\quad + \sum_{\ell=1}^{s} \innerB[H]{\int_{t_{i-1}}^{t_i} B_{\ell}(t,  u(t)) - B_{\ell}(t_{i-1},U^{i-1}) \diff{W(t)}}{e^{i-1}}\\
		&\leq 3 s^2 \sum_{\ell=1}^{s} \Big \| \int_{t_{i-1}}^{t_i} \big(B_{\ell}(t,  u(t)) - B_{\ell}(t_{i-1},U^{i-1}) \big) \diff{W(t)} \Big\|_H^2\\ 
		&\quad + \frac{1}{12 s^2} \sum_{\ell=1}^{s} \| e_{\ell}^i - e^{i-1}\|_H^2 \\
		&\quad + \sum_{\ell=1}^{s} \innerB[H]{\int_{t_{i-1}}^{t_i} B_{\ell}(t,  u(t)) - B_{\ell}(t_{i-1},U^{i-1}) \diff{W(t)}}{e^{i-1}}\\
		&= \Gamma_{4,1} + \Gamma_{4,2} + \Gamma_{4,3}.
	\end{align*}
	In expectation, the term $\Gamma_{4,1}$ can be bounded by first using the It\^o-isometry and then Assumption~\ref{ass:B} to obtain%
	{\allowdisplaybreaks
		\begin{align*}
			\E \big[\Gamma_{4,1} \big]
			&= 3 s^2 \sum_{\ell=1}^{s} \int_{t_{i-1}}^{t_i} \E \big[ \big \| B_{\ell}(t,  u(t)) - B_{\ell}(t_{i-1},U^{i-1}) \big\|_{L_2^0}^2 \big] \diff{t} \\
			&\leq 6 s^2 \Big(\sum_{\ell=1}^{s} \int_{t_{i-1}}^{t_i} \E \big[ \big \| B_{\ell}(t,  u(t)) - B_{\ell}(t_{i-1},u(t_{i-1})) \big\|_{L_2^0}^2 	\big] \diff{t} \\
			&\qquad + h_i \sum_{\ell=1}^{s} \E \big[ \big \| B_{\ell}(t_{i-1},u(t_{i-1}) - B_{\ell}(t_{i-1},U^{i-1}) \big\|_{L_2^0}^2 \big]\Big)\\
			&\leq 6 L_B^2 s^2 \Big(\int_{t_{i-1}}^{t_i} \big( | t - t_{i-1}|^{2\nu} + \E \big[ \| u(t) - u(t_{i-1}) \|_H^2 \big]\big) \diff{t} 
			+ h_i \E \big[ \big \| e^{i-1} \big\|_H^2 \big]\Big)\\
			&\leq 6 L_B^2 s^2 \Big(h_i^{1 + 2 \nu} \frac{1 + C_{\textup{reg}}}{1 + 2 \nu}
			+ h_i \sum_{\ell=1}^{s} \E \big[ \big \| e_{\ell-1}^{i} \big\|_H^2 \big]\Big)\\
			&=: h_i^{1 + 2 \nu} C_{\Gamma_4,1}
			+ h_i C_{\Gamma_4,2} \sum_{\ell=1}^{s} \E \big[ \big \| e_{\ell-1}^{i} \big\|_H^2 \big],
	\end{align*}}
	where we used the regularity condition from \eqref{eq:convergence_proof_reg} and the fact that $\|e^{i-1}\|_H^2 = \|e_0^{i}\|_H^2 \leq \sum_{\ell=1}^{s} \| e_{\ell-1}^{i} \|_H^2$. For the second term $\Gamma_{4 ,2}$, we apply a telescopic sum argument and H\"older's inequality for sums. Then we find that
	\begin{align*}
		\Gamma_{4 ,2}
		&= \frac{1}{12 s^2} \sum_{\ell=1}^{s} \Big\| \sum_{j=1}^{\ell} \big(e_{j}^i - e_{j-1}^{i}\big) \Big\|_H^2
		\leq \frac{1}{12 s^2} \sum_{\ell=1}^{s} \ell \sum_{j=1}^{\ell} \big\| e_{j}^i - e_{j-1}^{i}\big\|_H^2\\
		&\leq \frac{1}{12 s^2} \sum_{\ell=1}^{s} s \sum_{j=1}^{s} \big\| e_{j}^i - e_{j-1}^{i}\big\|_H^2 = \frac{1}{12} \sum_{j=1}^{s} \big\| e_{j}^i - e_{j-1}^{i}\big \|_H^2.
	\end{align*}
	The last term $\Gamma_{4,3}$ disappears in expectation. This can be seen by the stochastic independence and properties of the It\^o integral,
	\begin{align*}
		&\E\big[\Gamma_{4,3}\big] \\
		&=\E \Big[\sum_{\ell=1}^{s} \innerB[H]{\int_{t_{i-1}}^{t_i} B_{\ell}(t,  u(t)) - B_{\ell}(t_{i-1},U^{i-1}) \diff{W(t)}}{e^{i-1}} \Big]\\
		&=\E \Big[\sum_{\ell=1}^{s} \innerB[H]{\E \Big[\int_{t_{i-1}}^{t_i} B_{\ell}(t,  u(t)) - B_{\ell}(t_{i-1},U^{i-1}) \diff{W(t)}\Big| \F_{t_{i}} \Big]}{e^{i-1}}\Big| \F_{t_{i-1}} \Big] \Big]
		= 0.
	\end{align*}
	It remains to bound $\Gamma_5$. We begin to add and subtract $e^{i-1}$ in the right side of the inner product, insert the expansion for the $Q$-Wiener process and apply Lemma~\ref{lem:CauchyInEq} for $\xi = 3 s^2$. It then follows that 
	\allowdisplaybreaks{
		\begin{align*}
			\Gamma_5
			&= \sum_{\ell=1}^{s} \innerb[H]{B_{\ell}(t_{i-1},U^{i-1}) \big(\Delta W^{i} - \Delta W^{i, K_{\ell}} \big)}{e_{\ell}^i - e^{i-1}}\\
			&\quad + \sum_{\ell=1}^{s} \innerb[H]{B_{\ell}(t_{i-1},U^{i-1}) \big(\Delta W^{i} - \Delta W^{i, K_{\ell}} \big)}{e^{i-1}}\\
			&\leq 3 s^2 \sum_{\ell=1}^{s} \big\| B_{\ell}(t_{i-1},U^{i-1}) \big(\Delta W^{i} - \Delta W^{i, K_{\ell}}\big) \big\|_H^2 
			+ \frac{1}{12 s^2} \sum_{\ell=1}^{s} \| e_{\ell}^i - e^{i-1}\|_H^2 \\
			&\quad + \sum_{\ell=1}^{s} \innerb[H]{B_{\ell}(t_{i-1},U^{i-1}) \big(\Delta W^{i} - \Delta W^{i, K_{\ell}}\big) }{e^{i-1}}
			= \Gamma_{5,1} + \Gamma_{5,2} + \Gamma_{5,3}.
	\end{align*}}%
	For the first term $\Gamma_{5,1}$, we note that we can exchange $\Delta W^i - \Delta W^{i,K_{\ell}}$ by the sum $\sum_{k > K_{\ell}} \int_{t_{i-1}}^{t_{i}} \diff{\beta_{k}(t)}$ of scalar Wiener processes and recal that $\{q_{k}^{\frac{1}{2}} \psi_{k}\}_{k \in \N}$ is an orthonormal basis of $Q^{\frac{1}{2}}(H)$. Further, we recall that all these summands are stochastically independent, apply It\^o's isometry, the bound on the truncation error and the a prioiri from Lemma~\ref{lem:Apriori} to find that%
	{\allowdisplaybreaks
		\begin{align*}
			\E \big[\Gamma_{5,1}\big]
			&= 3 s^2 \sum_{\ell=1}^{s} \E\Big[ \Big\| \sum_{k > K_{\ell}} \int_{t_{i-1}}^{t_{i}} B_{\ell}(t_{i-1}, U^{i-1}) \big(q_{k}^{\frac{1}{2}} \psi_{k} \big) \diff{\beta_{k}(t)} \Big\|_H^2 \Big]\\
			&= 3 s^2 \sum_{\ell=1}^{s} \sum_{k > K_{\ell}} \E\Big[ \Big\| \int_{t_{i-1}}^{t_{i}} B_{\ell}(t_{i-1}, U^{i-1}) 	\big(q_{k}^{\frac{1}{2}} \psi_{k} \big) \diff{\beta_{k}(t)} \Big\|_H^2 \Big]\\
			&= h_i 3 s^2 \sum_{\ell=1}^{s} \sum_{k > K_{\ell}} \E\big[ \big\| B_{\ell}(t_{i-1}, U^{i-1}) \big(q_{k}^{\frac{1}{2}} \psi_{k}\big) \big\|_H^2 \big]\\
			&\leq h_i 3 s^2 \Big(\sum_{\ell=1}^{s} \varepsilon_{\ell}\Big) \big(1 + \E\big[ \|U^{i-1} \|_H^2 \big]\big)
			\leq h_i 3 s^2 (1 + C_{\textup{bound}}) \sum_{\ell=1}^{s} \varepsilon_{\ell}
			=: h_i \varepsilon C_{\Gamma_5}.
	\end{align*}}%
	Analogously to $\Gamma_{4,2}$ and $\Gamma_{4,3}$, it follows that $\Gamma_{5,2} \leq \frac{1}{12} \sum_{\ell=1}^{s} \big\| e_{\ell}^i - e_{\ell-1}^{i}\big \|_H^2$ and $\E \big[ \Gamma_{5,3} \big] = 0$. We can now combine the previous bounds to the following,%
	{\allowdisplaybreaks
		\begin{align*}
			&\frac{1}{2} \Big( \E\big[\| e^i\|_H^2\big] - \E\big[\| e^{i-1}\|_H^2\big] + \sum_{\ell=1}^{s} \E\big[\| e_{\ell}^i - 	e_{\ell-1}^i\|_H^2\big]\Big)\\
			&\leq h_i^{1+ 2\nu} \big(C_{\Gamma_1,1} + C_{\Gamma_2} + C_{\Gamma_3} + C_{\Gamma_4,1}\big)
			+ h_i \big(C_{\Gamma_1,2}+C_{\Gamma_4,2}\big) \sum_{\ell=1}^{s} \E \big[ \| e_{\ell-1}^i\|_H^2\big]\\
			&\quad- h_i \frac{\eta}{2} \sum_{\ell=1}^{s} \E\big[\|e_{\ell}^i\|_{V_{\ell}}^2\big]
			+ h_i \varepsilon C_{\Gamma_5}
			+ \frac{1}{4} \sum_{j=1}^{s} \E \big[\big\| e_{j}^i - e_{j-1}^{i}\big \|_H^2\big].
	\end{align*}}%
	After multiplying by $2$, rearranging some terms and summing up from $i = 1$ to $n$, we find
	\begin{align*}
		&\E\big[\| e^n\|_H^2\big] - \E\big[\| e^{0}\|_H^2\big] + \frac{1}{2} \sum_{i=1}^{n} \sum_{\ell=1}^{s} \E\big[\| e_{\ell}^i - e_{\ell-1}^i\|_H^2\big]
		+ \eta \sum_{i=1}^{n} h_i \sum_{\ell=1}^{s} \E\big[\|e_{\ell}^i\|_{V_{\ell}}^2\big]\\
		&\leq h^{2\nu} 2T \big(C_{\Gamma_1,1} + C_{\Gamma_2} + C_{\Gamma_3} + C_{\Gamma_4,1}\big)
		+ 2 \big(C_{\Gamma_1,2}+C_{\Gamma_4,2}\big) \sum_{i=1}^{n} h_i \sum_{\ell=1}^{s} \E \big[ \| e_{\ell-1}^i\|_H^2\big]\\
		&+ \varepsilon 2 T C_{\Gamma_5}.
	\end{align*}
	In the last step, we use that the initial error $\E\big[\| e^{0}\|_H^2\big]$ is zero and it then remains to apply Gr\"onwall's inequality for sums. We abbreviate the terms as follows
	\begin{align*}
		u_{n,\ell} &= \E\big[\|e_{\ell}^n\|_H^2\big] + \frac{1}{2} \sum_{i = 1}^{n} \sum_{j=1}^{\ell} \E\big[\| e_{j}^i - e_{j-1}^i\|_H^2\big]
		+ \eta \sum_{i = 1}^{n} h_i \sum_{j=1}^{\ell} \E\big[\|e_{j}^i\|_{V_{j}}^2\big],\\
		a &= h^{2\nu} 2T \big( C_{\Gamma_1,1} + C_{\Gamma_2} + C_{\Gamma_3} + C_{\Gamma_4,1}\big) + \varepsilon 2 T C_{\Gamma_5},
		\quad b_{i,\ell} = h_i 2 \big( C_{\Gamma_1,2} + C_{\Gamma_4,2} \big).
	\end{align*}
	Recall that $\E\big[\|e_{s}^n\|_H^2\big] = \E\big[\|e^n\|_H^2\big]$. 
	Applying Lemma~\ref{lem:discreteGronwall2} then shows that $u_{n, s} \leq a \exp \big( \sum_{i=1}^{n-1} \sum_{\ell=1}^{s} b_{i, \ell} \big)$ or equivalently
	\begin{align*}
		&\E\big[\|e^n\|_H^2\big] + \frac{1}{2} \sum_{i = 1}^{n} \sum_{\ell=1}^{s} \E\big[\| e_{\ell}^i - e_{\ell-1}^i\|_H^2\big]
		+ \eta \sum_{i = 1}^{n} h_i \sum_{\ell=1}^{s} \E\big[\|e_{\ell}^i\|_{V_{\ell}}^2\big]\\
		&\leq \exp \big(2 s T \big( C_{\Gamma_1,2} + C_{\Gamma_4,2} \big)\big)
		\big(h^{2\nu} 2T \big( C_{\Gamma_1,1} + C_{\Gamma_2} + C_{\Gamma_3} + C_{\Gamma_4,1}\big) + \varepsilon 2 T C_{\Gamma_5}\big),
	\end{align*}
	which is our desired bound.
\end{proof}

\section{Numerical experiment}
\label{sec:numerical_experiment}

In our numerical experiment, we consider a stochastic heat equation on a domain $\D = (0,1)^2$ in the two dimensional space $\R^2$. We allow for a nonlinear perturbation and  possibly multiplicative noise. More exactly, we look at an SPDE of the form
\begin{align}\label{eq:SPDE_numEx}
	\begin{cases}
		\diff{u(t,x)} - \alpha(t) \Delta u(t,x) \diff{t} = \big( \tilde{F}(t, x, u) + \tilde{G}(t, x)\big) \diff{t}\\  
		\hspace{4.5cm} + \tilde{B}(t, x, u) \diff{W(t,x)}, \quad
		t \in (0,1], x \in \D,\\
		u(t, x) = 0, \hspace{6.35cm} t \in [0,1], x \in \partial \D,\\
		u(0,x) = u_0(x), \hspace{5.6cm} x \in \D.
	\end{cases}
\end{align}
In the following, we denote the Hilbert spaces $V = W_0^{1,2}(\D)$, $H = L^2(\D)$ and $V^* = \big(W_0^{1,2}(\D)\big)^{*}$. The coefficients $\alpha$, $\tilde{F}$, $\tilde{G}$ and $\tilde{B}$, will be explained in more detail in Sections~\ref{subsec:experiment1} and \ref{subsec:experiment2}. We then consider the abstract functions and operators as defined in \eqref{eq:defA}--\eqref{eq:defB}. These operators fulfill the Assumptions~\ref{ass:initialValue}, \ref{ass:opA}, \ref{ass:F} and \ref{ass:B} as explained in Sections~\ref{subsec:domain_decomp} and \cite[Lemma~3.1]{EisenmannStillfjord.2022}.

The noise $W$ is a $Q$-Wiener process where the eigenvalues $\{q_k\}_{k \in \N^2}$ and eigenfunctions $\{\psi_k\}_{k \in \N^2}$ of $Q$ are given by
\begin{align*}
	q_{k} = (k_1^2 +k_2^2)^{- \frac{1}{2} - \delta} 
	\quad \text{and} \quad
	\psi_k(x) = 2 \sin(k_1 \pi x_1) \sin(k_2 \pi x_2)
\end{align*}
for $\delta \in (0,\infty)$. 
Then we can represent $W$ and the truncated noise $W^K$ through the Karhunen-Lo\`{e}ve expansion (cf. \cite[Section~10.2]{LordEtAl.2014})
\begin{align*}
	W(t, x) = \sum_{k \in \N^2} q_k^{\frac{1}{2}} \beta_{k}(t) \psi_{k}(x) \quad \text{and} \quad
	W^K(t, x) = \sum_{|k| \leq K} q_k^{\frac{1}{2}} \beta_{k}(t) \psi_{k}(x),
\end{align*}
where $\{\beta_k(t)\}_{k \in \N^2}$ are independent and identically distributed $\F_t$-Wiener processes, $|k| = |(k_1, k_2)| = \max\{|k_1|, |k_2|\}$ and $K \in \N$. The functions $\{\psi_{k}\}_{k \in \N^2}$ form  an orthonormal basis of $H = L^2(\D)$ and they are eigenfunctions of the Laplace operator with homogeneous Dirichlet boundary conditions. The noise therefore fits into the setting explained in Assumption~\ref{ass:W}.

For $\alpha \equiv 1$ and sufficiently smooth $\tilde{F}$, $\tilde{B}$ and $u_0$, we can verify that the exact solution lies in $C^{\nu}([0,T]; L^2(\Omega;V))$ for $\nu \in (0,\delta)$ using Theorem~\ref{thm:regularity}. When choosing a very small value $\delta$ close to zeros, we should therefore only observe a convergence order close to zero. This is a certain gap between our error bound and the numerical results in the coming subsections. While similar regularity assumptions have been made in \cite{KruseWeiske.2021}, they seem to observe the same gap as we do. A possible explanation could be that the example still provides a more regular solution or that the same bound could also be observed without the H\"older condition with values in $V$ for the exact solution. This is a question left for future research.

For the domain decomposition, we divide the domain $(0,1)^2$ into four strips in $x_1$-direction. We choose the weight functions $\chi_{\ell}$, $\ell \in \{1,\dots,s\}$, as piecewise linear functions with an overlap that is of the size $1/10$, compare Figure~\ref{fig:weight_functions} for a visualisation. 
\begin{figure}
	\includegraphics[width=0.45\textwidth]{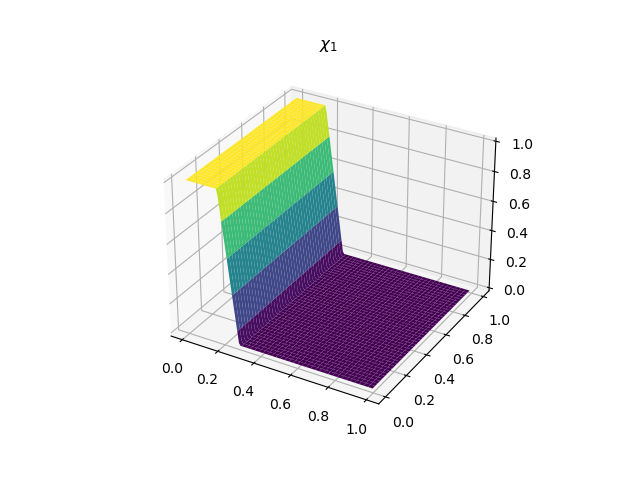}
	\includegraphics[width=0.45\textwidth]{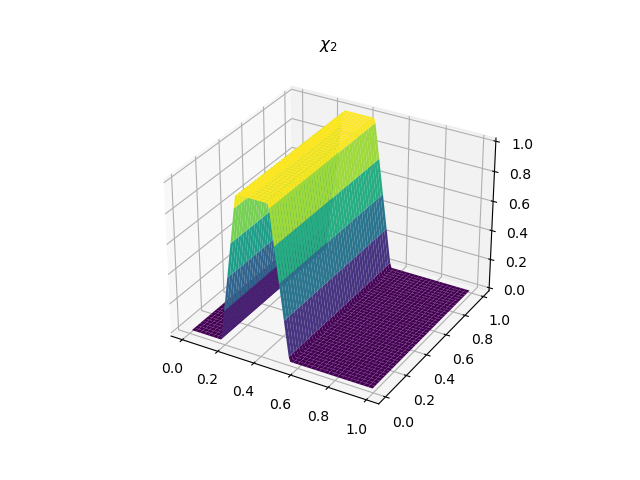}
	\includegraphics[width=0.45\textwidth]{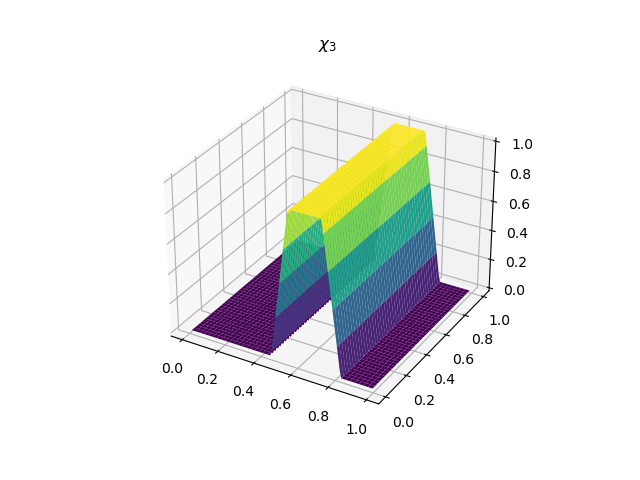}
	\includegraphics[width=0.45\textwidth]{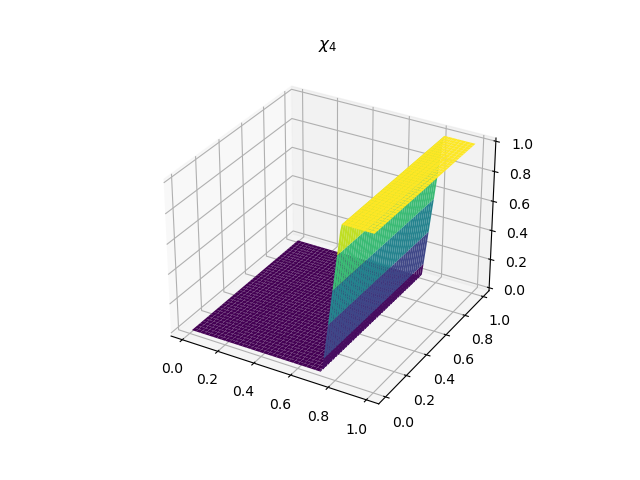}
	\caption{Visuablization of the weight functions $\chi_1, \chi_2, \chi_3$ and $\chi_4$}
	\label{fig:weight_functions}
\end{figure}
Using these weight functions, we can introduce the split data as previously defined in \eqref{eq:defA}--\eqref{eq:defB}.

In our numerical experiment, we discretize \eqref{eq:SPDE_numEx} using the time discrtization scheme \eqref{eq:scheme}. Moreover, to discretize the equation in space, we use a standard finite difference approximation, where we subdivide the domain in both $x_1$ and $x_2$ direction in $128$ points. The noise is truncated with $K = 128$ and we choose regularisation parameter $\delta = 0.001$. In this setting, the spatial error and truncation error are small enough such that the temporal error will be dominating the error.
In order to model the expectation involved in the mean-square error bound, we use $50$ Monte Carlo iterations.

\subsection{Experiment 1}
\label{subsec:experiment1}

In the first example, we choose the functions
\begin{align*}
	u_0(x) &= 5 x_1^2 (x_1-1)^3 x_2^2 (x_2-1)^3, \quad \alpha(t) = 0.1 (1 + \exp(-t))\\
	\tilde{G}(t,x) &= 0, \quad \tilde{F}(t, x, u) = \sin(u), \quad \tilde{B}(t, x, u) = 1.
\end{align*}
The function $u_0$ is a smooth and bounded function on $\D$ and therefore in particular in $H$. As it does not depend on $\omega$, it follows directly that $u_0 \in L^2(\Omega;H)$. Moreover, the function $\alpha$ is a smooth function and the functions $\tilde{F}$, $\tilde{G}$ and $\tilde{B}$ do not depend on $t$ in the following. 

Thus, all functions satisfy the given assumptions on the time-regularity stated in Theorem~\ref{thm:errorBound}. The abstract function $G$ is zero as can be seen as $\tilde{G}$ is zero which is inserted into \eqref{eq:defG}. Moreover, since $\sin$ is a Lipschitz continuous function, it follows that the abstract function $F$ given through $\tilde{F}$ in \eqref{eq:defF} is Lipschitz continuous in the second argument. Similarly, the abstract function $B$ given through $\tilde{B}$ in \eqref{eq:defB} is Lipschitz continuous in the second argument as it is constant. Moreover, since the sequence $\{(k_1^2 +k_2^2)^{-\frac{1}{2}-\delta}\}_{k \in \N^2}$ is summable, it follows that for every $\varepsilon_{\ell} > 0$, we can find a $K_{\ell} \in \N$ such that
\begin{align*}
	\sum_{|k| > K_{\ell}} \E\big[ \|B_{\ell}(t,v)(q_{k}^{\frac{1}{2}} \psi_k)\|_H^2 \big]
	\leq \sum_{|k| > K_{\ell}} q_k = \sum_{|k| > K_{\ell}} (k_1^2 +k_2^2)^{-\frac{1}{2}-\delta} \leq \varepsilon_{\ell}.
\end{align*}
for every $v \in L^2(\Omega;H)$. 

Altogether, we have a semi-linear equation with additive noise that fits into the setting of Theorem~\ref{thm:errorBound} and observe a convergence rate of $\frac{1}{2}$ in the numerical test presented on the left side of Figure~\ref{fig:num_exp}.

\begin{figure}
	\includegraphics[width=0.49\textwidth]{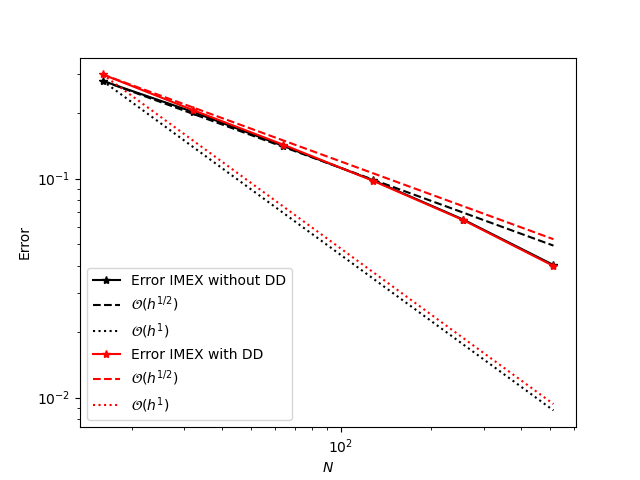}
	\includegraphics[width=0.49\textwidth]{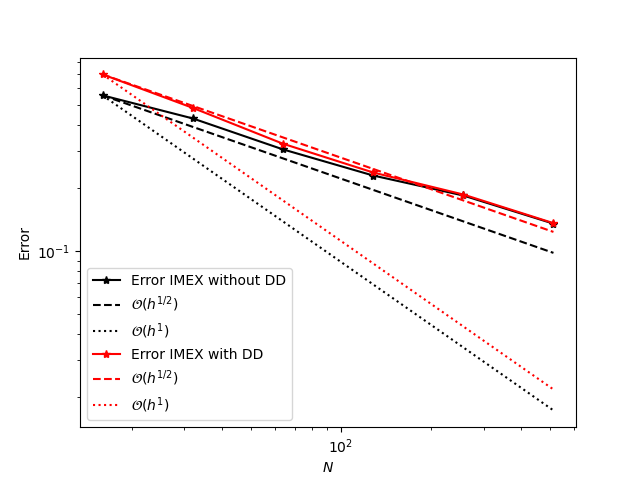}
	\caption{Comparison of an IMEX-Euler--Maruyama scheme with and without a domain decomposition. The left-hand side shows the result of Experiment~1 and right-hand side the results of Experiment~2.}
	\label{fig:num_exp}
\end{figure}

\subsection{Experiment 2}
\label{subsec:experiment2}

In the second example, we consider a linear equation with multiplicative noise with the coefficients
\begin{align*}
	u_0(x) &= 5 x_1^2 (x_1-1)^3 x_2^2 (x_2-1)^3, \quad \alpha(t) = 0.1, \quad \tilde{G}(t,x) = 0,\\
	\tilde{F}(t, x, u) &= (3 \sin(5x_1) + 2 \cos(7x_2)) (\cos(t) + \sin(4t)), \quad 
	\tilde{B}(t, x, u) = u.
\end{align*}
We can argue in a similar fashion to that in Section~\ref{subsec:experiment2} that the coefficient fit into the setting required in Theorem~\ref{thm:errorBound}. The coefficients are all smooth. In contrast to Section~\ref{subsec:experiment2} the corresponding diffusion operator $B$ is not constant in the second argument but still linear and bounded. In order to fulfill the truncation assumption (H4), we first note that $\|\psi_k\|_{\infty} \leq 2$ for every $k \in \N^2$. Since $\{(k_1 +k_2)^{-\frac{1}{2}-\delta}\}_{k \in \N^2}$ is summable, we can then again argue that for every $\varepsilon_{\ell} > 0$, there exists $K_{\ell} \in \N$ such that
\begin{align*}
	\sum_{|k| > K_{\ell}} \E\big[ \|B_{\ell}(t,v)(q_{k}^{\frac{1}{2}} \psi_k)\|_H^2 \big]
	\leq 4 \sum_{|k| > K_{\ell}} (k_1^2 +k_2^2)^{-\frac{1}{2}-\delta} \E\big[\| v \|_H^2 \big]
	\leq \varepsilon_{\ell} \E\big[\| v \|_H^2 \big].
\end{align*}
for every $v \in L^2(\Omega;H)$. Thus, the truncation assumption from (H4) is fulfilled. 
Again, we observe a convergence order of $\frac{1}{2}$ in the right error plot from Figure~\ref{fig:num_exp}.

\bibliographystyle{plain}
\bibliography{main_arxiv}

\appendix
\section{Basic inequalities} \label{sec:appendix}

\begin{lemma} \label{lem:CauchyInEq}
	For all $x,y \in \R$, $\xi > 0$ it follows that $|xy| \leq \xi|x|^2 + \frac{1}{4 \xi}|y|^2$.
\end{lemma}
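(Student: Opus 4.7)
The plan is to recognize this as a weighted form of Young's inequality (equivalently, a rescaled AM--GM inequality), and to prove it by completing a square whose cross term produces exactly $|xy|$. Since $\xi > 0$, the square roots $\sqrt{2\xi}$ and $1/\sqrt{2\xi}$ are well defined, which is all we need.

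Concretely, I would set $a = \sqrt{2\xi}\,|x|$ and $b = |y|/\sqrt{2\xi}$. Then by construction
\begin{align*}
ab = |x|\,|y| = |xy|, \qquad \tfrac{1}{2}\bigl(a^2 + b^2\bigr) = \xi\,|x|^2 + \tfrac{1}{4\xi}\,|y|^2.
\end{align*}
The conclusion then follows from the elementary fact that $ab \leq \tfrac{1}{2}(a^2+b^2)$ for all real $a,b$, which is itself just a rearrangement of $(a-b)^2 \geq 0$. Substituting back gives
\begin{align*}
|xy| = ab \leq \tfrac{1}{2}(a^2+b^2) = \xi\,|x|^2 + \tfrac{1}{4\xi}\,|y|^2,
\end{align*}
as claimed. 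Equivalently, one could expand $\bigl(\sqrt{2\xi}\,|x| - |y|/\sqrt{2\xi}\bigr)^2 \geq 0$ directly and rearrange.

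There is no real obstacle in this proof; the only subtlety is to choose the weights $\sqrt{2\xi}$ and $1/\sqrt{2\xi}$ (rather than $\sqrt{\xi}$ and $1/\sqrt{\xi}$) so that the cross term recovers $|xy|$ with coefficient $1$, which is what forces the constant in front of $|y|^2$ to be $\tfrac{1}{4\xi}$ rather than $\tfrac{1}{2\xi}$.
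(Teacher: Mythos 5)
Your proof is correct and is essentially the standard argument: the paper does not write out a proof but cites Evans's appendix of elementary inequalities, where this ``Cauchy's inequality with $\epsilon$'' is obtained by exactly the rescaling $a=\sqrt{2\xi}\,|x|$, $b=|y|/\sqrt{2\xi}$ combined with $(a-b)^2\geq 0$ that you use. Nothing further is needed.
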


A proof can be found \cite[B.2. Elementary inequalities]{Evans.1998}.
Further, we need a form of Gr\"onwall's inequality in this paper. A proof for the standard statement can be found in \cite{Clark.1987}. We use a variant that is based on a rearrangement of the terms, where the summands are subdivided into two sums.

\begin{lemma} \label{lem:discreteGronwall2}
	Let $(u_{n, \ell})_{n\in \N, \ell \in \{1,\dots, s \}}$ and $(b_{n, \ell})_{n \in \N, \ell \in \{1,\dots, s \}}$ be two nonnegative sequences
	that satisfy, for given $a\in [0,\infty)$ and $N \in \N$, that
	\begin{align*}
		u_{n, s} \leq a + \sum_{i=1}^{n-1} \sum_{\ell=1}^{s} b_{i, \ell} u_{i, \ell}, 
		\quad n \in \{1,\dots, N \}.
	\end{align*}	
	Then it follows that
	\begin{align*}
		u_{n, s} \leq a \exp \Big( \sum_{i=1}^{n-1} \sum_{\ell=1}^{s} b_{i, \ell} \Big), 
		\quad n \in \{1,\dots, N \}.
	\end{align*}
\end{lemma}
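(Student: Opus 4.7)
The statement resembles a standard discrete Gronwall inequality, but carries a double-indexed sequence summed over both a time index $i$ and a splitting-step index $\ell$. My plan is to reduce it to the scalar-valued discrete Gronwall and then prove the scalar version by induction.

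The first step is to exploit an implicit monotonicity. In the intended use (proof of Theorem~\ref{thm:errorBound}), the sequence $u_{i,\ell}$ is nondecreasing in $\ell$ for fixed $i$, because $u_{i,\ell}$ is defined as a sum of non-negative contributions indexed up to $\ell$. Granting $u_{i,\ell} \leq u_{i,s}$, the hypothesis collapses to
\[
u_{n,s} \leq a + \sum_{i=1}^{n-1} \tilde b_i \, u_{i,s}, \qquad \tilde b_i := \sum_{\ell=1}^{s} b_{i,\ell},
\]
which is the classical scalar discrete Gronwall inequality for $v_i := u_{i,s}$. This is precisely the rearrangement-into-two-sums trick alluded to in the paragraph preceding the lemma: the inner sum over $\ell$ is absorbed into a single effective coefficient $\tilde b_i$.

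Next I would prove the scalar version by induction on $n$. For $n=1$ the claim is $v_1 \leq a = a\exp(0)$. For the inductive step, I plug the bound $v_i \leq a\prod_{j=1}^{i-1}(1+\tilde b_j)$ (valid for $i<n$ by induction) into the recurrence and use the telescoping identity
\[
1 + \sum_{i=1}^{n-1} \tilde b_i \prod_{j=1}^{i-1}(1+\tilde b_j) = \prod_{j=1}^{n-1}(1+\tilde b_j),
\]
which itself follows from a one-line induction. Finally, the elementary estimate $1+x \leq e^x$ for $x \geq 0$ gives the exponential form $v_n \leq a \exp\bigl(\sum_{j=1}^{n-1} \tilde b_j\bigr)$, and unpacking $\tilde b_j = \sum_{\ell} b_{j,\ell}$ yields the claimed bound.

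The main obstacle is conceptual rather than technical: as stated, the hypothesis controls only $u_{n,s}$ while the right-hand side involves every $u_{i,\ell}$, so a naive induction on $n$ alone does not close. The monotonicity reduction above is the cleanest remedy and matches the structure of the application. An alternative route, should one wish to avoid the monotonicity assumption entirely, is to strengthen the inductive hypothesis to a lexicographic induction on pairs $(n,\ell)$, proving an analogous exponential bound for each $u_{n,\ell}$. Both routes produce the same final estimate; the monotonicity-based reduction seems more transparent for the purposes of a short proof.
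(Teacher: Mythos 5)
Your scalar reduction and the induction that follows are correct: once the hypothesis is collapsed to $v_n \le a + \sum_{i=1}^{n-1}\tilde b_i v_i$ with $v_i = u_{i,s}$ and $\tilde b_i = \sum_{\ell=1}^{s} b_{i,\ell}$, the product bound $v_n \le a\prod_{j=1}^{n-1}(1+\tilde b_j)$ together with $1+x\le e^x$ gives exactly the claimed exponential. Be aware, though, that the paper does not actually prove this lemma at all --- it cites the scalar statement and only remarks that the variant follows ``by a rearrangement of the terms'' --- so there is no detailed argument of the authors' to compare yours against; your write-up supplies what the paper omits.

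Your diagnosis of the obstacle is the most valuable part of the proposal, and it deserves to be stated more forcefully: as literally written the lemma is \emph{false}, so some repair is not optional but indispensable. Take $s=2$, $N=2$, $a=1$, $b_{1,1}=1$ and all other $b_{i,\ell}=0$. The hypothesis then constrains only $u_{1,2}\le 1$ and $u_{2,2}\le 1+u_{1,1}$, while $u_{1,1}$ is completely free; choosing $u_{1,1}=M$ large allows $u_{2,2}=1+M$, which exceeds the claimed bound $a\exp(b_{1,1})=e$. Your monotonicity assumption $u_{i,\ell}\le u_{i,s}$ is one valid repair; the repair closest in spirit to the authors' ``two sums'' remark is instead to impose the recurrence for \emph{every} pair $(n,\ell)$, with the double sum running over the lexicographically smaller pairs, after which the scalar Gr\"onwall inequality applies verbatim under the relabelling $m=(i-1)s+\ell$. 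Two caveats on your remedies. First, in the application (proof of Theorem~\ref{thm:errorBound}) the quantity $u_{n,\ell}$ is \emph{not} obviously nondecreasing in $\ell$: its first summand $\E\big[\|e_\ell^n\|_H^2\big]$ is not a cumulative sum over $j\le\ell$, so $u_{i,\ell}\le u_{i,s}$ would need its own argument there; what the application really uses is only $\E\big[\|e_{\ell}^i\|_H^2\big]\le u_{i,\ell}$. Second, your proposed lexicographic induction ``without the monotonicity assumption'' cannot close as described, because the stated hypothesis provides no inequality for $u_{n,\ell}$ with $\ell<s$ to feed the induction; that route only works once the hypothesis itself is strengthened as above.
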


\section{Proof of moment bound}\label{sec:appendix_proof}

For the sake of completeness, we state the proof of Lemma~\ref{lem:Apriori} in the following.

\begin{proof}[Proof of Lemma~\ref{lem:Apriori}]
	The idea of this proof follows the idea of an a priori bound that is commonly used for the Rothe methode for deterministic (nonlinear) evolution equations. A common step for variational solutions, we start by testing the equation given by the scheme with the solution and use the coercivity and Lipschitz continuity of the operators. A similar proof structure can be found in \cite[Lemma~8.6]{Roubicek.2013}. First, we consider a sub-step of the numerical scheme
	\begin{align*}
		U_{\ell}^i - U_{\ell-1}^i = h_i G_{\ell}(t_i) 
		+ h_i F_{\ell}(t_{i-1}, U_{\ell-1}^i)
		- h_i A_{\ell}(t_i, U_{\ell}^i)
		+ B_{\ell}(t_{i-1}, U^{i-1}) \Delta W^{i, K_{\ell}},
	\end{align*}
	which we test with the solution $U_{\ell}^i$. Then we use the identity $\inner[H]{a - b}{a} = \frac{1}{2} \big( \|a\|_H^2 - \|b\|_H^2 + \|a - b\|_H^2\big)$ for $a, b \in H$ and sum up the equation from $\ell = 1$ to $s$ to obtain
	\begin{align*}
		&\frac{1}{2} \sum_{\ell=1}^{s} \big( \|U_{\ell}^i\|_H^2 - \| U_{\ell-1}^i\|_H^2 + \|U_{\ell}^i - U_{\ell-1}^i\|_H^2 \big)
		= \sum_{\ell=1}^{s} \inner[H]{U_{\ell}^i - U_{\ell-1}^i}{U_{\ell}^i}\\ 
		&= h_i \sum_{\ell=1}^{s}\inner[H]{F_{\ell}(t_{i-1}, U_{\ell-1}^i)}{U_{\ell}^i}
		+ h_i \sum_{\ell=1}^{s} \dualVell{G_{\ell}(t_i)}{U_{\ell}^i} \\
		&\quad - h_i \sum_{\ell=1}^{s} \dualVell{A_{\ell}(t_i, U_{\ell}^i)}{U_{\ell}^i}
		+ \sum_{\ell=1}^{s} \inner[H]{B_{\ell}(t_{i-1}, U^{i-1}) \Delta W^{i, K_{\ell}}}{U_{\ell}^i}\\
		&= \Gamma_1 + \Gamma_2 + \Gamma_3 + \Gamma_4.
	\end{align*}
	In the following, we bound the single terms separately. By adding and subtracting the term $U_{\ell-1}^i$ in the right side of the inner product and using the Cauchy-Schwarz inequality, the first term $\Gamma_1$ can be bounded by
	\begin{align*}
		\Gamma_1 
		&\leq h_i \sum_{\ell=1}^{s} \|F_{\ell}(t_{i-1}, U_{\ell-1}^i) \|_H \| U_{\ell}^i - U_{\ell-1}^{i}\|_H
		+ h_i \sum_{\ell=1}^{s} \| F_{\ell}(t_{i-1}, U_{\ell-1}^i) \|_H \| U_{\ell-1}^i \|_H.
	\end{align*}
	Further applying Lemma~\ref{lem:CauchyInEq} with $\xi = 2h$ and $\xi = \frac{1}{4}$ respectively, as well as the Lipschitz continuity of $F_{\ell}$ and boundedness of $F_{\ell}(t_{i-1},0)$ in the $H$-norm, we can bound $\Gamma_1$ by%
	{\allowdisplaybreaks
		\begin{align*}
			\Gamma_1
			&\leq h_i \sum_{\ell=1}^{s} \Big(2h \|F_{\ell}(t_{i-1}, U_{\ell-1}^i) \|_H^2
			+ \frac{1}{8h} \| U_{\ell}^i - U_{\ell-1}^{i}\|_H^2\Big)\\
			&\quad + h_i \sum_{\ell=1}^{s} \Big( \frac{1}{4}\| F_{\ell}(t_{i-1}, U_{\ell-1}^i) \|_H^2 + \| U_{\ell-1}^i \|_H^2 \Big)\\
			&\leq h_i \Big(4h + \frac{1}{2}\Big) \sum_{\ell=1}^{s} \|F_{\ell}(t_{i-1}, U_{\ell-1}^i) - F_{\ell}(t_{i-1}, 0)\|_H^2 
			+ h_i \Big(4h+ \frac{1}{2}\Big) s K_F^2 \\
			&\quad + \frac{1}{8} \sum_{\ell=1}^{s} \| U_{\ell}^i - U_{\ell-1}^{i}\|_H^2
			+ h_i \sum_{\ell=1}^{s} \| U_{\ell-1}^i \|_H^2\\
			&\leq h_i \Big(4h L_F^2 + \frac{L_F^2}{2} + 1\Big) \sum_{\ell=1}^{s} \|U_{\ell-1}^i \|_H^2 
			+ h_i \Big(4h+ \frac{1}{2}\Big) s K_F^2
			+ \frac{1}{8} \sum_{\ell=1}^{s} \| U_{\ell}^i - U_{\ell-1}^{i}\|_H^2,
	\end{align*}}
	where we abbreviate $C_{\Gamma_1,1} := 4h L_F^2 + \frac{L_F^2}{2} + 1$ and $C_{\Gamma_1,2} := (4h + \frac{1}{2}) s K_F^2$.
	Furthermore, for the second summand $\Gamma_2$, we start by applying Cauchy--Schwarz inequality for the duality pairing. We can then apply Lemma~\ref{lem:CauchyInEq} to decompose the product into two summands, one containing the term $\| G_{\ell}(t_i)\|_{V_{\ell}^*}$ and one that we will compensate through the coercivity of $A_{\ell}(t_i, \cdot)$. More precesely, we apply Lemma~\ref{lem:CauchyInEq} with the constant $\xi = \frac{1}{2 \mu}$ where $\mu$ is connected to the coercivity of $A_{\ell}(t_i, \cdot)$ and then obtain
	\begin{align*}
		\Gamma_2
		\leq h_i \sum_{\ell=1}^{s} \| G_{\ell}(t_i)\|_{V_{\ell}^*} \|U_{\ell}^i\|_{V_{\ell}}
		\leq h_i \frac{1}{2 \mu} \sum_{\ell=1}^{s} \| G_{\ell}(t_i)\|_{V_{\ell}^*}^2 + h_i \frac{\mu}{2} \sum_{\ell=1}^{s} \|U_{\ell}^i\|_{V_{\ell}}^2,
	\end{align*}
	where we abbreviate $C_{\Gamma_2} := \frac{1}{2 \mu} \sum_{\ell=1}^{s} \| G_{\ell}(t_i)\|_{V_{\ell}^*}^2$ in the following.
	To bound the summand $\Gamma_3$, we now apply the coercivity of $A_{\ell}(t_i, \cdot)$ and find
	\begin{align*}
		\Gamma_3 
		= - h_i \sum_{\ell=1}^{s} \dualVell{A_{\ell}(t_i, U_{\ell}^i)}{U_{\ell}^i}
		\leq - h_i \mu \sum_{\ell=1}^{s} \| U_{\ell}^i\|_{V_{\ell}}^2.
	\end{align*}
	It remains to bound the term $\Gamma_4$, which includes the Wiener process. We then add and subtract the term $\sum_{\ell=1}^{s} \inner[H]{B_{\ell}(t_{i-1}, U^{i-1}) \Delta W^{i, K_{\ell}}}{U^{i-1}}$ and again apply Lemma~\ref{lem:CauchyInEq} with $\xi = 2s^2$ to obtain
	\begin{align*}
		\Gamma_4 
		&\leq 2 s^2 \sum_{\ell=1}^{s} \| B_{\ell}(t_{i-1}, U^{i-1}) \Delta W^{i, K_{\ell}}\|_H^2
		+ \frac{1}{8s^2} \sum_{\ell=1}^{s} \| U_{\ell}^i - U^{i-1}\|_H^2\\
		&\quad + \sum_{\ell=1}^{s} \inner[H]{B_{\ell}(t_{i-1}, U^{i-1}) \Delta W^{i, K_{\ell}}}{U^{i-1}}
		= \Gamma_{4,1} + \Gamma_{4,2} + \Gamma_{4,3}.
	\end{align*}
	Next, we begin to consider the expectation of $\Gamma_{4,1}$ more closely. To this extend, we first start to bound the expectation of the summands $\| B_{\ell}(t_{i-1}, U^{i-1}) \Delta W^{i, K_{\ell}}\|_H^2$ separately. Inserting the definition of the $Q$-Wiener process, applying the independence of the Brownian motions $\beta_k$ and an application of the It\^o isometry, yields that
	\begin{align*}
		&\E\big[ \| B_{\ell}(t_{i-1}, U^{i-1}) \Delta W^{i, K_{\ell}}\|_H^2 \big]\\
		&= \E\Big[ \Big\| \sum_{k \leq K_{\ell}} \int_{t_{i-1}}^{t_{i}} B_{\ell}(t_{i-1}, U^{i-1}) \big(q_{k}^{\frac{1}{2}} \psi_{k} \big) \diff{\beta_{k}(t)} \Big\|_H^2 \Big]\\
		&= \sum_{k \leq K_{\ell}} \E\Big[ \Big\| \int_{t_{i-1}}^{t_{i}} B_{\ell}(t_{i-1}, U^{i-1}) \big(q_{k}^{\frac{1}{2}} \psi_{k} \big) \diff{\beta_{k}(t)} \Big\|_H^2 \Big]\\
		&= \sum_{k \leq K_{\ell}} \int_{t_{i-1}}^{t_{i}} \E\big[ \big\| B_{\ell}(t_{i-1}, U^{i-1}) \big(q_{k}^{\frac{1}{2}} \psi_{k}\big) \big\|_H^2 \big] \diff{t} \\
		&=  h_i \sum_{k \leq K_{\ell}} \E\big[ \big\| B_{\ell}(t_{i-1}, U^{i-1}) \big(q_{k}^{\frac{1}{2}} \psi_{k}\big) \big\|_H^2 \big]
		\leq h_i \E\big[ \big\| B_{\ell}(t_{i-1}, U^{i-1}) \big\|_{L_2^0}^2 \big],
	\end{align*}
	where in the last step we used the fact that $\{q_{k}^{\frac{1}{2}} \psi_{k}\}_{k \in \N}$ is an orthonormal basis of $Q^{\frac{1}{2}}(H)$. With this in mind, we can find a suitable bound for the expectation of $\Gamma_{4,1}$. We apply the boundedness of $B_{\ell}(t_{i-1}, 0)$ and the Lipschitz continuity of $B_{\ell}(t_{i-1}, \cdot)$ in $L_2^0$ from Assumption~\ref{ass:B} as well as the fact that   
	\begin{align*}
		\| U^{i-1} \|_H^2 
		= \| U_{0}^{i} \|_H^2
		\leq \| U_{0}^{i} \|_H^2 + \| U_{1}^{i} \|_H^2 + \dots + \| U_{s}^{i} \|_H^2
		= \sum_{\ell=1}^{s} \| U_{\ell-1}^{i} \|_H^2,
	\end{align*}
	to find
	\begin{align*}
		\E\big[\Gamma_{4,1}\big]
		&\leq h_i 2 s^2 \sum_{\ell=1}^{s} \E\big[ \big\| B_{\ell}(t_{i-1}, U^{i-1}) \big\|_{L_2^0}^2 \big]
		\leq h_i 4 L_B^2 s^3 \E\big[ \| U^{i-1} \|_H^2 \big]
		+ h_i 4 s^3 K_B^2  \\
		&\leq h_i 4 L_B^2 s^3 \sum_{\ell=1}^{s} \E\big[ \| U_{\ell-1}^{i} \|_H^2 \big]
		+ h_i 4 K_B^2 s^3,  
	\end{align*}
	where we abbreviate $C_{\Gamma_4,1} := 4 L_B^2 s^3$ and $C_{\Gamma_4,2} := 4 K_B^2 s^3$.
	Next we bound the expectation of $\Gamma_{4 ,2}$. This can be done by inserting a telescopic sum, which we then bound by an application of H\"older's inequality for sums. Then it follows that
	\begin{align*}
		\E \big[\Gamma_{4 ,2} \big] 
		&= \frac{1}{8 s^2} \sum_{\ell=1}^{s} \E \Big[ \Big\| \sum_{j = 1}^{\ell} \big(U_{j}^i - U_{j-1}^{i}\big) \Big\|_H^2\Big]\leq \frac{1}{8 	s^2} \sum_{\ell=1}^{s} \E \Big[ \ell \sum_{j = 1}^{\ell} \big\| U_{j}^i - U_{j-1}^{i}\big\|_H^2\Big]\\
		&\leq \frac{1}{8 s^2} \sum_{\ell=1}^{s} \E \Big[ s \sum_{j=1}^{s} \big\| U_{j}^i - U_{j-1}^{i}\big\|_H^2\Big]
		= \frac{1}{8} \sum_{j=1}^{s} \E \big[ \big\| U_{j}^i - U_{j-1}^{i} \big \|_H^2 \big].
	\end{align*}
	Due to the stochastic independence of $B(t_{i-1}, U^{i-1})$ and $\Delta W^{i, K_{\ell}}$, we observe that the last term $\Gamma_{4,3}$ disappears in expectation. Thus, combining all the previous bounds, it follows that
	\begin{align*}
		&\frac{1}{2} \Big( \E\big[\|U^i\|_H^2\big] - \E\big[\|U^{i-1}\|_H^2\big] + \sum_{\ell=1}^{s} \E\big[\| U_{\ell}^i - U_{\ell-1}^i\|_H^2\big]\Big)\\
		&\leq
		h_i \big( C_{\Gamma_1,1} + C_{\Gamma_4,1} \big) \sum_{\ell=1}^{s} \|U_{\ell-1}^i \|_H^2 
		+ h_i \big( C_{\Gamma_1,2} + C_{\Gamma_2} + C_{\Gamma_4,2}\big)\\
		&- h_i \frac{\mu}{2} \sum_{\ell=1}^{s} \|U_{\ell}^i\|_{V_{\ell}}^2
		+ \frac{1}{4} \sum_{j=1}^{s} \E \big[ \big\| U_{j}^i - U_{j-1}^{i} \big \|_H^2 \big].
	\end{align*}
	Rearranging the terms and multiplying the result by $2$, shows that
	\begin{align*}
		&\E\big[\|U^i\|_H^2\big] - \E\big[\|U^{i-1}\|_H^2\big] + \frac{1}{2} \sum_{\ell=1}^{s} \E\big[\| U_{\ell}^i - U_{\ell-1}^i\|_H^2\big]
		+ h_i \mu \sum_{\ell=1}^{s} \E\big[\|U_{\ell}^i\|_{V_{\ell}}^2\big]\\
		&\leq h_i 2 \big( C_{\Gamma_1,1} + C_{\Gamma_4,1} \big) \sum_{\ell=1}^{s} \E \big[\|U_{\ell-1}^i \|_H^2 \big]
		+ h_i 2 \big( C_{\Gamma_1,2} + C_{\Gamma_2} + C_{\Gamma_4,2}\big).
	\end{align*}
	We sum up the inequality from $i = 1$ to $n$ and use a telescoping sum argument to obtain
	\begin{align*}
		&\E\big[\|U^n\|_H^2\big] - \E\big[\|U^0\|_H^2\big] + \frac{1}{2} \sum_{i = 1}^{n} \sum_{\ell=1}^{s} \E\big[\| U_{\ell}^i - 	U_{\ell-1}^i\|_H^2\big]
		+ \mu \sum_{i = 1}^{n} h_i \sum_{\ell=1}^{s} \E\big[\|U_{\ell}^i\|_{V_{\ell}}^2\big]\\
		&\leq  2 \big( C_{\Gamma_1,1} + C_{\Gamma_4,1} \big) \sum_{i=1}^{n} h_i \sum_{\ell=1}^{s} \E \big[\|U_{\ell-1}^i \|_H^2 \big]
		+ 2 T \big( C_{\Gamma_1,2} + C_{\Gamma_2} + C_{\Gamma_4,2}\big).
	\end{align*}
	For the next step, we will apply Gr\"onwall's inequality analogously to the proof of Theorem~\ref{thm:errorBound}. Then we find
	\begin{align*}
		&\E\big[\|U^n\|_H^2\big] + \frac{1}{2} \sum_{i = 1}^{n} \sum_{\ell=1}^{s} \E\big[\| U_{\ell}^i - U_{\ell-1}^i\|_H^2\big]
		+ \mu \sum_{i = 1}^{n} h_i \sum_{\ell=1}^{s} \E\big[\|U_{\ell}^i\|_{V_{\ell}}^2\big]\\
		&\leq \exp \big(2 s T \big( C_{\Gamma_1,1} + C_{\Gamma_4,1} \big)\big)
		\big(\E\big[\|U^0\|_H^2\big] + 2 T \big( C_{\Gamma_1,2} + C_{\Gamma_2} + C_{\Gamma_4,2}\big) \big),
	\end{align*}
	which is finite.
\end{proof}

\end{document}